\newtheorem{theorem}{Theorem}[section]
\newtheorem{lemma}[theorem]{Lemma}
\newtheorem{conjecture}[theorem]{Conjecture}
\newtheorem{remark}[theorem]{Remark}
\newtheorem{proposition}[theorem]{Proposition}
\newtheorem{problem}[theorem]{Problem}
\DeclareMathOperator{\res}{res} \DeclareMathOperator{\dime}{dim}
  \DeclareMathOperator{\ecc}{ecc}
 \DeclareMathOperator{\g}{g}
\title{On the metric dimension, the upper dimension and the resolving number of graphs}
\author{Delia Garijo\thanks{Department of
Applied Mathematics I, University of Seville, Spain. Email
addresses: {\tt \{dgarijo,gonzalezh,almar\}@us.es}.} \and Antonio
González$^*$ \and Alberto Márquez$^*$}
\date{}
\begin{document}
\maketitle

\begin{abstract}
This paper deals with three resolving parameters: the metric
dimension, the upper dimension and the resolving number. We first
answer a question raised by Chartrand and Zhang asking for a
characterization of the graphs with equal metric dimension and
resolving number. We also solve in the affirmative a conjecture
posed by Chartrand, Poisson and Zhang about the realization of
the metric dimension and the upper dimension. Finally we prove
that no integer $a\geq 4$ is realizable as the resolving number
of an infinite family of graphs.

\end{abstract}

\section{Introduction}

In this paper, we study resolving sets for finite simple
connected graphs. They were introduced in the 1970s independently
by Slater \cite{slater}, and Harary and Melter \cite{melter}. The
usefulness of these sets comes from their multiple applications
in several areas, among them: coin weighing problems, network
discovery and verification, robot navigation, strategies for
Mastermind game and chemical industry (we refer the reader to
\cite{cartesian}  for a number of references on this topic).
Resolving sets are formally defined as follows.

Let $G=(V(G),E(G))$ be a finite, simple, connected graph of order
$n=|V(G)|$. The \emph{distance} $d(u,v)$ between two vertices
$u,v\in V(G)$ is the length of a shortest $u$-$v$ path in $G$. A
vertex $u\in V(G)$ \emph{resolves} a pair $\{x,y\}\subset V(G)$
if $d(u,x)\neq d(u,y)$. A set of vertices $S\subseteq V(G)$ is a
\emph{resolving set} of $G$ if every pair of vertices of $G$ is
resolved by some vertex of $S$. A resolving set $S$ of minimum
size is a  \emph{metric basis}, and $|S|$ is the \emph{metric
dimension} of $G$, denoted by $\dime (G)$.

Our aim is not only to deal with metric bases and metric dimension
but also with two other resolving parameters defined by Chartrand
et al. \cite{upper}, \emph{the upper dimension} and \emph{the
resolving number}, that give an insight of how dense the set of
resolving sets of a graph is.

A resolving set $S$ of $G$ is \emph{minimal} if no proper subset
of $S$ is a resolving set. An \emph{upper basis}  is a minimal
resolving set containing the maximum number of vertices. The
\emph{upper dimension} ${\rm dim}^+(G)$ is the
 size of an upper basis. The \emph{resolving number} ${\rm res}(G)$  is the
minimum $k$ such that every $k$-subset of $V(G)$ is a resolving
set of $G$.

Clearly, every $(n - 1)$-subset of $V(G)$ is a resolving set  and
every resolving set contains a minimal resolving set. Hence,
$$1\leq {\rm dim}(G) \leq {\rm dim}^+(G) \leq {\rm res}(G) \leq n
- 1.$$

When $\dime (G)=\res (G)=k$, the graph $G$ is called
\emph{randomly $k$-dimensional}, that is, every subset of size
$k$ is a metric basis and so $G$ has the maximum number of metric
bases.

Chartrand and Zhang \cite{chrom} posed the problem of
characterizing the randomly $k$-dimensional graphs. They solved
the case $k\leq 2$, obtaining the complete graphs $K_1$ and $K_2$
(for $k=1$) and odd cycles (for $k=2$) which includes as a
particular case the complete graph $K_3$. Nevertheless, it
remained open the main following question.

\begin{problem}\label{problem}{\rm\cite{chrom}}
Are there randomly $k$-dimensional graphs other than complete
graphs and odd cycles?
\end{problem}

Concerning the three parameters, Chartrand et al. \cite{upper}
investigated some relationships among them. They proved that
every pair $a,b$ of integers with $2\leq a \leq b$ is realizable
as the metric dimension and the resolving number, respectively, of
some connected graph $G$. It was also shown the analogous result
for ${\rm dim}(G)={\rm dim}^+(G) =a$ and $\res(G)= b$. Moreover,
the authors proved that every pair among the three parameters  can
differ by an arbitrarily large number. Thus, as remarked in
\cite{upper}, there was reason to believe that every pair $a,b$
of integers with $2\leq a \leq b$ is realizable as the metric
dimension and the upper dimension, respectively, of some
connected graph. This remained as a conjecture.

\begin{conjecture}\label{conjecture}{\rm\cite{upper}}
For every pair $a, b$ of integers with $2 \leq a \leq b$, there
exists a connected graph $G$ with ${\rm dim}(G) = a$ and ${\rm
dim}^+(G) = b$.
\end{conjecture}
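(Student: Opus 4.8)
The plan is to prove the conjecture by explicitly constructing, for each pair $a,b$ with $2\le a\le b$, a connected graph $G$ with $\dime(G)=a$ and $\dime^+(G)=b$. The guiding idea is that the metric dimension and the upper dimension measure, respectively, the smallest and the largest minimal resolving set, so I want a graph that admits a small metric basis of size $a$ but also admits a much larger minimal resolving set of size $b$. The natural strategy is a modular construction: take a ``core'' gadget that pins down the metric dimension to $a$, and attach a separate ``inflating'' gadget whose only purpose is to create a large minimal resolving set without lowering the metric dimension below $a$ or raising it above $a$.

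Let me first sketch the building blocks. For the metric-dimension part, I would use vertices with many pendant-like or twin neighbours: recall that if two vertices $u,v$ have the same neighbourhood (they are \emph{twins}), then any resolving set must contain at least one of them, since no third vertex can distinguish $u$ from $v$. A family of mutually-twin vertices therefore forces almost all of them into every resolving set, which is a clean way to guarantee a lower bound on the dimension and simultaneously to force the structure of every metric basis. For the upper-dimension part I want the opposite behaviour: a subgraph in which a large antichain of vertices forms a minimal resolving set, meaning each vertex resolves some pair that no other vertex in the set resolves, so that the whole set is irredundant. Paths or long induced cycles attached in a controlled way are the typical source of such ``spread out'' minimal resolving sets, because distances along a path let a single vertex be the unique resolver of a specific pair.

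The key steps, in order, would be: (1) Define the graph $G=G_{a,b}$ as a union of a twin-gadget contributing the lower bound $a$ and a path/cycle-gadget contributing a large minimal resolving set of size $b$, joined through one or two connector vertices so the whole thing is connected; the connectors must be chosen so that distances across the two gadgets do not accidentally let a small set resolve everything. (2) Prove $\dime(G)=a$ by exhibiting an explicit resolving set of size $a$ (choosing one vertex from each twin-class plus whatever the path needs) and proving optimality via the twin argument, which gives $\dime(G)\ge a$. (3) Prove $\dime^+(G)\ge b$ by exhibiting a \emph{minimal} resolving set of size $b$: verify it resolves all pairs, and verify minimality by showing that for each of its $b$ vertices there is a pair resolved \emph{only} by that vertex. (4) Prove $\dime^+(G)\le b$ by bounding the size of any minimal resolving set, typically by showing that once you include ``enough'' vertices the set stops being irredundant, so no minimal resolving set can exceed $b$. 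Step (3) and especially the upper bound in step (4) are where the real work lies.

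I expect the main obstacle to be step (4): controlling the \emph{maximum} size of a minimal resolving set is delicate, because minimality is a global irredundancy condition and it is easy to build resolving sets that are large but accidentally contain a redundant vertex, which disqualifies them, while conversely an ad hoc large set might fail to be resolving. The crux will be to design the inflating gadget rigidly enough that every minimal resolving set is forced into one of a few combinatorial ``types'', and then to compute the maximum size over those types and check it equals exactly $b$ rather than merely being bounded by something larger. A secondary difficulty is the interaction between the two gadgets: I must ensure the connector vertices do not create unintended shortest paths that either collapse twin-classes (which would drop $\dime$) or let a clever large irredundant set exceed $b$. Handling the boundary cases $a=b$ and small values of $a$ separately, and confirming the construction is genuinely connected and simple, will round out the argument.
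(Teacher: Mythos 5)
Your proposal is a plan rather than a proof: it never names a concrete graph, and it openly defers the two steps that carry all the mathematical content --- exhibiting a minimal resolving set of size exactly $b$, and bounding every minimal resolving set by $b$. More importantly, the one concrete suggestion you do make for the ``inflating'' gadget would fail. You propose paths or long induced cycles as the source of large minimal resolving sets, on the grounds that distances along a path let a single vertex be the unique resolver of some pair. But paths and cycles have \emph{bounded} upper dimension: in a path $P_n$, any two distinct vertices already form a resolving set (a pair of vertices equidistant from $u$ is symmetric about $u$, and no pair is symmetric about two different vertices), so every minimal resolving set of a path has size at most $2$, and cycles behave similarly. No amount of lengthening the path inflates $\dime^+$, so your gadget cannot realize arbitrary $b$. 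The missing idea --- the actual crux of the paper --- is a structure in which one can \emph{certify} many pairwise-irredundant vertices: the paper uses the $\ell\times\ell$ grid $G_\ell$, where a diagonal pair $\{x,y\}$ at distance $2$ is resolved only by the vertices in $Q_2(x)\cup Q_4(y)$ (Lemma \ref{lemma1}), which makes the $2(\ell-1)$ vertices near the main diagonal a minimal resolving set of unbounded size even though $\dime(G_\ell)=2$.

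Your step (4) worry is well placed, but you give no mechanism for it, and this is where the paper does its real work: it introduces $S$-unique pairs (pairs resolved by exactly one vertex of $S$), reduces every $S$-unique diagonal pair to one at distance $2$ (Lemma \ref{lemma2}), and shows that at most two vertices of a minimal $S$ can be associated to pairs in the same row or column (Lemma \ref{lemma3}), yielding the upper bound $\dime^+\leq 2\ell-2$ for the triangle-augmented grid $H_\ell$. Your twin-gadget idea for pinning $\dime(G)=a$ does match the paper's second family $H_{\ell,m}$ ($m$ pendant vertices at the origin are mutual twins, forcing $m-1$ of them into every resolving set), so that part of your architecture is sound. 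But even granting it, you would still face a parity obstruction you do not address: each fixed construction realizes only one residue of $b-a$, which is why the paper needs four families ($H_\ell$, $H_{\ell,m}$, and their truncated versions $\tilde H_\ell$, $\tilde H_{\ell,m}$ on an $(\ell-2)\times(\ell-1)$ grid) to cover all pairs $2\leq a\leq b$. As it stands, the proposal identifies the right questions but supplies neither a working gadget nor the irredundancy-counting argument that answers them.
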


In this paper, we provide a combinatorial proof avoiding the brute
force casuistic analysis to solve Problem~\ref{problem} (see
Theorem \ref{charact}). We also
 prove in the affirmative
Conjecture~\ref{conjecture} (see Theorem \ref{ab}) and show that
no integer $a\geq 4$ is realizable as the resolving number of an
infinite family of graphs (see Theorem~\ref{resfinite}).

\section{Graphs $G$ with $\dime (G)=\res (G)$.}

In this section, we characterize the randomly $k$-dimensional
graphs. The main difficulty in this problem is to avoid the
casuistic analysis resulting from the intuitive idea of
considering metric bases formed by $k$ vertices close to a given
vertex, and then to try to extend this local argument to the
whole graph. In fact, while preparing this paper, we have learnt
of \cite{ckrandom}, where the authors prove the same result going
through this type of analysis and using also results related to
the degree of the vertices, the connectivity and the induced
subgraphs. Here, we present an alternative  proof based on
combinatorial arguments.

We start with some technical lemmas needed for the case $k=3$.
Denote by $\mathcal{P}_{\lambda}(V(G))$ the $\lambda$-subsets of
$V(G)$ and let $N_i(u)$ be the set of vertices at distance $i$
from $u\in V(G)$. For $\{u,v\}, \{x,y\}\in \mathcal{P}_2(V(G))$
we say that \emph{the pair} $\{u,v\}$ \emph{resolves the pair}
$\{x,y\}$ if either $u$ or $v$ resolves it.

\begin{lemma}\label{keylemma}
Let $G$ be a randomly $3$-dimensional graph. Then, the following
statements hold.
\begin{enumerate}
\item[{\rm (a)}] For every pair $\{u,v\}\in \mathcal{P}_2(V(G))$ there exist unique pairs $\{x,y\},\{r,s\}\in \mathcal{P}_2(V(G))$
such that $\{x,y\}$ is not resolved by $\{u,v\}$, and $\{u,v\}$
is not resolved by $\{r,s\}$.
%\item[{\rm (b)}]  The number of non-resolved pairs of vertices by any vertex $u\in V(G)$ is equal to $n-1$.

\item[{\rm (b)}]  Every vertex $u\in V(G)$ satisfies that
\begin{equation}\label{eq1}\sum_{1\leq i\leq \ecc(u)}
\binom{|N_i(u)|}{2}=n-1 \end{equation} where $\ecc (u)$ denotes
the eccentricity of $u$, i.e., the maximum distance from $u$ to
any other vertex.
\end{enumerate}
\end{lemma}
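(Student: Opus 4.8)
The plan is to prove both statements by exploiting the defining property of a randomly $3$-dimensional graph: every $3$-subset of $V(G)$ is a metric basis, and equivalently every $2$-subset fails to be a resolving set. So for each pair $\{u,v\}$ there is at least one pair $\{x,y\}$ that $\{u,v\}$ does not resolve (otherwise $\{u,v\}$ would already resolve everything, contradicting $\dime(G)=3$). The content of part (a) is that this unresolved pair is \emph{unique}, and symmetrically that each pair $\{u,v\}$ is left unresolved by a unique pair $\{r,s\}$.

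For part (a), I would argue by contradiction for existence-and-uniqueness of $\{x,y\}$. Suppose $\{u,v\}$ fails to resolve two distinct pairs $\{x,y\}$ and $\{x',y'\}$. The key observation is that adding a single third vertex $w$ must turn $\{u,v\}$ into a resolving set, since $\{u,v,w\}$ is a metric basis for every choice of $w$. So a single vertex $w$ must simultaneously resolve $\{x,y\}$ and $\{x',y'\}$ for \emph{every} $w \in V(G)\setminus\{u,v\}$. I would try to derive a contradiction by producing a vertex (or showing one of $x,y,x',y'$ itself) that cannot resolve both pairs, using that the two unresolved pairs overlap in at most one vertex and that distances from $u$ (or $v$) collapse on each pair. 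The symmetric claim for $\{r,s\}$ — the unique pair not resolving $\{u,v\}$ — should follow from the same reasoning applied in the dual direction, i.e. by swapping the roles of "resolver" and "resolved," noting that $\{u,v\}$ resolves $\{x,y\}$ if and only if $\{x,y\}$ resolves $\{u,v\}$ is \emph{not} symmetric in general, so the two halves of (a) genuinely need separate (though parallel) arguments.

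For part (b), I would fix a vertex $u$ and count pairs $\{x,y\}$ that $u$ fails to resolve. A vertex $u$ fails to resolve $\{x,y\}$ precisely when $d(u,x)=d(u,y)$, i.e. when $x,y$ lie in the same distance class $N_i(u)$. Hence the total number of pairs not resolved by $u$ alone is exactly $\sum_{1\leq i\leq \ecc(u)}\binom{|N_i(u)|}{2}$ (the class $N_0(u)=\{u\}$ contributes nothing). The plan is then to show this quantity equals $n-1$ by using part (a): I would set up a counting/bijection argument pairing each vertex $v\neq u$ with the unique pair that $\{u,v\}$ fails to resolve, and relate the pairs unresolved by the \emph{pair} $\{u,v\}$ to those unresolved by the single vertex $u$. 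Concretely, a pair $\{x,y\}$ is unresolved by $\{u,v\}$ iff it is unresolved by $u$ and by $v$; part (a) says each of the $n-1$ choices of $v$ yields a \emph{unique} such pair, so summing should recover $n-1$ on one side, while the distance-class count gives the left-hand side.

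The main obstacle will be part (a), specifically nailing the uniqueness argument cleanly without descending into the very casuistics the authors advertise they are avoiding. The delicate point is showing that a single added vertex $w$ can resolve at most a limited collection of the pairs left unresolved by $\{u,v\}$ — one must leverage that $w$ imposes only one distance function and that the unresolved pairs sit inside the common distance classes of both $u$ and $v$. Getting the bookkeeping in (b) exactly right — ensuring that each pair unresolved by $\{u,v\}$ is counted with the correct multiplicity as $v$ ranges over $V(G)\setminus\{u\}$, and that this multiplicity is consistently $1$ by the uniqueness in (a) — is where the two parts must be stitched together, and is the step I would scrutinize most carefully.
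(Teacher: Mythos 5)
Your overall architecture matches the paper's: existence of an unresolved pair because $\dime(G)=3$, a three-subset contradiction for one direction of uniqueness, and in (b) the distance-class count $\sum_{i}\binom{|N_i(u)|}{2}$ matched against the $n-1$ pairs $\{u,v\}$ with $v\neq u$. But there is a genuine gap exactly where you yourself flag the obstacle: the uniqueness of the pair $\{x,y\}$ unresolved by $\{u,v\}$. The local strategy you propose --- producing a single vertex $w$ (or one of $x,y,x',y'$) that cannot resolve both $\{x,y\}$ and $\{x',y'\}$ --- cannot work: any vertex of $\{x,y\}\cup\{x',y'\}$ trivially resolves the pair containing it, and the hypothesis that every $\{u,v,w\}$ is a metric basis only demands that each $w$ resolve both pairs, which carries no local contradiction. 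The missing idea is global. For each pair $A\in\mathcal{P}_2(V(G))$ let $S_A\subset\mathcal{P}_2(V(G))$ be the set of pairs not resolved by $A$. The three-subset argument you correctly sketch in the ``dual direction'' shows precisely that the sets $S_A$ are \emph{pairwise disjoint}: if $\{x,y\}\in S_A\cap S_{A'}$ with $A\neq A'$, then $A\cup A'$ contains three vertices none of which resolves $\{x,y\}$, contradicting that every $3$-set is resolving. Since each $S_A$ is nonempty and there are $\binom{n}{2}$ pairwise disjoint sets $S_A$ inside $\mathcal{P}_2(V(G))$, which itself has $\binom{n}{2}$ elements, pigeonhole forces $|S_A|=1$ for every $A$ \emph{and} forces every pair to lie in exactly one $S_A$. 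This single count delivers both uniqueness claims of (a) simultaneously, and also the \emph{existence} of $\{r,s\}$, a point your proposal never addresses at all: nothing local guarantees that some pair fails to resolve $\{u,v\}$; only the counting does. (The paper's write-up of Lemma~\ref{keylemma}(a) is itself terse here, but this counting is its intended mechanism, made explicit in the $k\geq 4$ part of the proof of Theorem~\ref{charact}.)

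The same omission propagates into your plan for (b). Injectivity of your map $v\mapsto$ (the unique pair unresolved by $\{u,v\}$) follows from the disjointness above, but surjectivity --- that every pair $\{x,y\}$ with $d(u,x)=d(u,y)$ arises from some $v$ --- needs exactly the ``every pair lies in some $S_A$'' half of the pigeonhole: the unique pair $\{r,s\}$ not resolving $\{x,y\}$ must contain $u$, since otherwise $\{u,r,s\}$ would be a $3$-set failing to resolve $\{x,y\}$; writing $\{r,s\}=\{u,v\}$ then exhibits the required $v$. With the global count in hand both halves of your bookkeeping close cleanly; without it, the first uniqueness claim of (a), the existence of $\{r,s\}$, and the surjectivity step of (b) all lack an engine, so as it stands the proposal does not constitute a proof.
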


\begin{proof}

To prove Statement (a), consider a graph $G$ verifying that
$\dime(G) = \res(G) = 3$. Clearly, for every pair of vertices
$\{u, v\}$ there is a pair $\{x, y\}$ that is not resolved by
$\{u, v\}$ (otherwise $\dime(G) = 2)$. Suppose on the contrary
that there are two pairs $\{u, v\},\{\tilde{u}, \tilde{v}\}$ that
have the same associated pair $\{x, y\}\in\mathcal{P}_2(V (G))$,
i.e., $\{x, y\}$ is not resolved by either $\{u, v\}$ or
$\{\tilde{u}, \tilde{v}\}$, and assume that $u, v\neq \tilde{u}$.
Then the set $\{u, v, \tilde{u}\}$ is not a metric basis, which
is a contradiction.  It is analogous to prove that there is a
unique pair $\{r,s\}\in \mathcal{P}_2(V(G))$ such that $\{u,v\}$
is not resolved by $\{r,s\}$. Hence, the result follows.

%Statement (b) is a consequence of Statement (a), taking $u\in
%V(G)$ and considering the $n-1$ distinct pairs $\{u,v\}$ with
%$v\in V(G)\setminus\{u\}$.

As a consequence of Statement (a) we have that the size of the
set of non-resolved pairs  by a vertex $u\in V(G)$ is equal to
$n-1$ (it suffices to consider the $n-1$ distinct pairs $\{u,v\}$
with $v\in V(G)\setminus\{u\}$). This set is formed by pairs of
vertices at the same distance from $u$ and so its size is equal
to $\displaystyle{\sum_{1\leq i\leq
\ecc(u)}}\binom{|N_i(u)|}{2}$, which proves Statement (b).

\end{proof}

\begin{remark}\label{remark} Fixed $u\in V(G)$, consider the  partition $\mathcal{P}(u)=\{N_i(u) \, | \, 0\leq
i\leq \ecc(u)\}$ of $V(G)$ into classes (where $N_{0}(u)=\{u\}$).
Lemma~\ref{keylemma}(b) says that there is a compensation between
vertices of $V(G)\setminus \{u\}$ and
 pairs of vertices located in the same class of
$\mathcal{P}(u)$. For instance, classes of size at least $4$
always contribute to Equation \ref{eq1} with more pairs than
vertices ($6$ pairs and $4$ vertices in case of size $4$) and so
they have to be compensated with classes of size at most $2$ whose
contribution is bigger in terms of vertices than in pairs. Note
that classes of size $3$ which contribute with $3$ pairs, are
self-compensated. Therefore, the existence of a class of size at
least $4$ in the partition $\mathcal{P}(u)$ is equivalent to the
existence of at least two classes of size at most $2$.
\end{remark}

The following straightforward lemma will be useful for the proofs
of this paper.

\begin{lemma}\label{landmarks} {\rm\cite{landmarks}} Let $u,v,w\in V(G)$ such that $\{v,w\}\in E(G)$ and $d(u,v)=d$.
Then $d(u,w)\in\{d-1,d,d+1\}$.
\end{lemma}

Lemma \ref{keylemma} and Remark \ref{remark} are the key tools to
prove the following lemma which let us avoid the casuistic
analysis to characterize the randomly $3$-dimensional graphs.

\begin{lemma}\label{regularity}
Let $G$ be a randomly $3$-dimensional graph of order $n$. Then,
$G$ is $3$-regular and $n\in\{4,7,10\}$.
\end{lemma}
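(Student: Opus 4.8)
The plan is to show first that $G$ is regular, and then to pin down the common degree and the order $n$ using the counting identity from Lemma~\ref{keylemma}(b). Let me think about how the structure of a randomly $3$-dimensional graph forces these conclusions.

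The identity $\sum_{i\ge 1}\binom{|N_i(u)|}{2}=n-1$ holds for every vertex $u$, and by Remark~\ref{remark} it severely restricts the sizes of the distance classes $N_i(u)$. My first step would be to rule out large classes. Since $|N_1(u)|=\deg(u)$, I want to bound the degree. The key observation is that Lemma~\ref{landmarks} forces a strong compatibility between consecutive classes: every vertex in $N_{i+1}(u)$ has all its neighbours in $N_i(u)\cup N_{i+1}(u)\cup N_{i+2}(u)$, and in particular must have a neighbour in $N_i(u)$. I plan to use this together with the "compensation" principle of Remark~\ref{remark}: a class of size $\ge 4$ must be balanced by at least two classes of size $\le 2$. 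I would try to derive a contradiction from having any class of size $\ge 4$, or at least to show that the eccentricity and the class sizes are tightly constrained — likely that $\ecc(u)\le 3$ and each nonzero class has size at most $3$.

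For regularity, the natural approach is to argue that the degree is forced to be constant by playing the identity off against itself at adjacent vertices. If $\{u,v\}\in E(G)$, then $N_1(u)$ and $N_1(v)$ overlap heavily and Lemma~\ref{landmarks} controls how a vertex can migrate between the distance classes of $u$ and those of $v$; since both $u$ and $v$ satisfy the same equation summing to $n-1$, a careful comparison should force $\deg(u)=\deg(v)$, and connectivity then propagates this to all vertices. Having established that $G$ is $r$-regular, so $|N_1(u)|=r$ for all $u$, I would substitute into the identity: $\binom{r}{2}\le n-1$ with equality exactly when $\ecc(u)=1$ (the complete graph). The remaining cases come from allowing $\ecc(u)=2$ or $3$ with the class sizes constrained as above; enumerating the few admissible profiles of $(|N_1(u)|,|N_2(u)|,\dots)$ that satisfy $\sum\binom{|N_i|}{2}=n-1$ with each class compatible with regularity should leave only $r=3$, yielding $\binom{3}{2}=3$ pairs per class and thus $n-1\in\{3,6,9\}$, i.e.\ $n\in\{4,7,10\}$.

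The main obstacle will be the regularity argument: moving from the purely numerical identity to the \emph{structural} claim that adjacent vertices have equal degree requires using Lemma~\ref{landmarks} to track precisely how the neighbourhood and second-neighbourhood of $u$ redistribute into the distance classes of an adjacent vertex $v$, and ensuring the compensation bookkeeping is consistent at both endpoints simultaneously. Once regularity is in hand, the final determination of $r=3$ and $n\in\{4,7,10\}$ is a short finite check driven entirely by the equation $\sum\binom{|N_i(u)|}{2}=n-1$ and the size restrictions from Remark~\ref{remark}, so I expect that part to be routine.
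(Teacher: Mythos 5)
There is a genuine gap: your plan runs entirely on the counting identity of Lemma~\ref{keylemma}(b) (with Remark~\ref{remark}) plus Lemma~\ref{landmarks}, but it never invokes the stronger statement (a) of Lemma~\ref{keylemma}, and without it neither the degree bound nor any bound on $n$ is attainable. The identity at a single vertex is perfectly consistent with degree $4$: the class-size profile $(|N_1|,|N_2|,|N_3|,|N_4|)=(4,3,1,1)$ gives $6+3+0+0=9=n-1$ with $n=10$, and $(4,1,3,2)$ gives $6+0+3+1=10=n-1$ with $n=11$; the compensation bookkeeping happily balances the surplus of a size-$\geq 4$ class against small classes elsewhere, and Lemma~\ref{landmarks} alone does not exclude such profiles. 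Worse, the identity cannot bound $n$ at all: the all-threes profile $(3,3,\ldots,3)$ with $k$ classes satisfies $3k=n-1$ for \emph{every} $k$, so nothing in your toolkit rules out $n=13,16,\ldots$, and your concluding step ``$3$ pairs per class, hence $n-1\in\{3,6,9\}$'' is unjustified --- it is even inconsistent with the paper's own $n=10$ case, where the admissible partition has classes of sizes $3,4,1,1$ contributing $3+6+0+0=9$ pairs, not three per class. The paper closes exactly these holes with Lemma~\ref{keylemma}(a): by the uniqueness of the non-resolving pair, each set $\mathcal{A}_{ij}=\{v\in V(G) \mid d(v,u_i)=d(v,u_j)\}$ contains \emph{exactly two} vertices, while Lemma~\ref{landmarks} and pigeonhole place every vertex of $G$ in some $\mathcal{A}_{ij}$ formed from a vertex and its neighbours; counting then yields $n\leq 7$ around a vertex of degree $\geq 4$ (whence the partition analysis forces a forbidden degree-$1$ vertex) and $n\leq 10$ around a degree-$3$ vertex, from which the enumeration $n\in\{4,7,10\}$ follows. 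These two bounds are the engine of the lemma and are absent from your proposal.

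Your regularity step is also misdirected: the paper does not compare the identity at adjacent endpoints of an edge and propagate equality of degrees (it is far from clear this can be made to work, since the identity only constrains the multiset of class sizes). Instead, $3$-regularity falls out by eliminating each degree separately at every vertex: degree $1$ is impossible because a pendant pair is resolved by all vertices, contradicting Lemma~\ref{keylemma}(a); degree $\geq 4$ is impossible by the $n\leq 7$ count above; and degree $2$ is impossible because $|N_1(u)|=2$ forces, by compensation within the bound $n\leq 10$, partitions giving $n=8$ or $n=9$, contradicting $n\in\{4,7,10\}$. So regularity is a byproduct of the degree being pinned to $3$ pointwise, not of any propagation along edges; without Lemma~\ref{keylemma}(a) as input, your sketch cannot reach either conclusion of the lemma.
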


\begin{proof}
First, observe that $G$ does not contain vertices of degree $1$.
Indeed, if a vertex $u$ has a unique neighbour $v$, then the pair
$\{u,v\}$ is resolved by every vertex of $G$, which contradicts
Lemma~\ref{keylemma}(a).

\

\noindent \emph{Claim 1. The degree of every vertex of $G$ is at
most 3.}

\begin{proof}
Suppose on the contrary that there exists $u\in V(G)$ of degree at
least 4 and let $ u_1,u_2,u_3,u_4\in N_1(u)$. By
Lemma~\ref{keylemma}(a), each set $\mathcal{A}_{ij}=\{v\in V(G)
\, | \, d(v,u_i)=d(v,u_j)\}$ with $1\leq i<j\leq 4$ contains
exactly two vertices of $G$. Moreover, Lemma~\ref{landmarks}
implies that every vertex of $G$ belongs to at least one of the
six sets $\mathcal{A}_{ij}$. Hence, $n\leq 7$ since  $u\in
\mathcal{A}_{ij}$ for all $i,j$.

Consider now the partition $\mathcal{P}(u)$ in which $N_1(u)$ is a
class of size at least 4. By Remark~\ref{remark}, there are at
least two classes of size at most $2$. Even more, since $n\leq 7$
then there are  exactly two classes of size 1 and so the furthest
vertex from $u$ has degree $1$; a contradiction. Therefore, every
vertex of $G$ has degree at most 3.
\end{proof}

\,

\noindent \emph{Claim 2. $n\in \{4,7,10\}$}.

\begin{proof}
Since $\dime(G)=3$ then $G$ is neither a path nor a cycle and so
there is a vertex $u\in V(G)$ of degree $3$ with neighbours, say
$u_1,u_2,u_3$. Arguing as in the proof of Claim 1 (defining the
analogous sets $\mathcal{A}_{ij}$ but for the vertices
$u,u_1,u_2,u_3$) we have that $n\leq 10$.

The sets $\{u\}$ and $\{u_1,u_2,u_3\}$ are the classes $N_0(u)$
and $N_1(u)$, respectively, in the partition $\mathcal{P}(u)$. If
this partition does not contain more classes, then $n=4$.
Otherwise, Remark~\ref{remark} says that the existence of a class
of size at least $4$ is equivalent to the existence of at least
two classes of size at most $2$, and classes of size 3 are
self-compensated. Since $n\leq 10$, it is easy to check that
there are two possibilities for $\mathcal{P}(u)$: (1) a class of
size 4 and two classes of size 1 (plus $N_0(u)$ and $N_1(u)$);
(2) two classes of size 3 (one being $N_1(u)$). This gives $n=10$
and $n=7$, respectively.
\end{proof}

\,

\noindent \emph{Claim 3. There is no vertex of degree 2.}

\begin{proof}
Suppose on the contrary that there is a vertex $u\in V(G)$ of
degree 2. Then, $|N_1(u)|=2$. By Remark~\ref{remark},
$\mathcal{P}(u)$ contains a class of size at least 4 and another
class of size at most 2.  Since $n\leq 10$  we have the following
two possibilities for $\mathcal{P}(u)$: (1) one class of size 4,
two classes of size 1 and one class of size 2; (2) one class of
size 4, one class of size 1 and two classes of size 2. This gives,
respectively, $n=8$ and $n=9$ which contradicts Claim 2.
\end{proof}
%
%Therefore, every vertex of $G$ has degree $3$ and $n\in\{4,7,10\}$, the
%desired result.

The three previous claims prove that a graph $G$ of order $n$
satisfying $\dime(G) = \res(G) = 3$  is $3$-regular and
$n\in\{4,7,10\}$.

\end{proof}

Now, we reach the desired characterization that solves Problem
\ref{problem}.

\begin{theorem}\label{charact}
A graph $G$ is randomly $k$-dimensional if and only if $G$ is a
complete graph or an odd cycle.

%$\dime(G)=\res(G)=k$, $k\geq 3$ if and only if  $G\cong K_{k+1}$.
\end{theorem}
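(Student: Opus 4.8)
The plan is to prove both directions of the characterization. The reverse direction (that complete graphs and odd cycles are randomly $k$-dimensional) is the easy part and is essentially already known from Chartrand and Zhang's work; I would verify it directly by a short computation of distances. The forward direction is the substance: assuming $G$ is randomly $k$-dimensional, I must show it is a complete graph or an odd cycle. The natural strategy is to induct on (or case-split by) the value of $k$, using the fact that the cases $k \le 2$ were already settled by Chartrand and Zhang (giving $K_1,K_2$ and odd cycles), and that $k=3$ has just been pinned down by Lemma~\ref{regularity}.

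First I would dispose of $k=3$ using Lemma~\ref{regularity}, which tells us that a randomly $3$-dimensional graph must be $3$-regular with $n \in \{4,7,10\}$. For each of these three orders I would enumerate the $3$-regular graphs and test the randomly $3$-dimensional property directly: for $n=4$ the only such graph is $K_4$, which is complete and hence fine; the hard check is ruling out $n=7$ and $n=10$. The expectation is that no $3$-regular graph on $7$ or $10$ vertices is randomly $3$-dimensional, so that the only randomly $3$-dimensional graph is $K_4 = K_{k+1}$, consistent with a complete graph. I would verify this either by the combinatorial constraints forced by Lemma~\ref{keylemma}(a) (the uniqueness of unresolved pairs severely restricts the distance structure) or, if necessary, by a finite check over the small list of candidate graphs.

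The main step is then the general case $k \ge 4$, where I would argue that no new graphs arise beyond complete graphs. The key idea is to reduce to the already-settled small cases: if $G$ is randomly $k$-dimensional, I would examine the local structure around a vertex $u$ via the partition $\mathcal{P}(u) = \{N_i(u)\}$ and argue, using an analogue of Lemma~\ref{keylemma}, that the eccentricity must be bounded and the neighbourhood structure must be highly symmetric. Specifically, for complete graphs every vertex has eccentricity $1$, while odd cycles have a rigid distance pattern; I would show that the randomly $k$-dimensional condition forces one of these two regimes. A clean route is to show that if $\dime(G)=\res(G)=k \ge 3$ and $G$ has diameter at least $2$, then the compensation argument of Remark~\ref{remark} (generalized from counting $2$-subsets to the appropriate count for $k$-subsets) forces $G$ to be a cycle, and oddness follows from the requirement that antipodal pairs be resolved; whereas diameter $1$ gives exactly $K_{k+1}$.

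The hard part will be the general $k \ge 4$ argument, since Lemma~\ref{keylemma} and Remark~\ref{remark} as stated are tailored to $k=3$ (they count pairs, i.e.\ $2$-subsets, reflecting that a $k$-subset resolves a graph iff every $(k-1)$-subset plus each extra vertex behaves correctly). The obstacle is to find the correct generalization of the ``unique unresolved configuration'' counting identity for arbitrary $k$, so that the eccentricity and regularity of $G$ can again be bounded and the candidate orders enumerated. I expect the cleanest approach is an inductive one: from a randomly $k$-dimensional graph with $k\ge 4$, delete or contract a suitable vertex (or pass to a neighbourhood) to produce a randomly $(k-1)$-dimensional structure, invoke the inductive hypothesis, and then check that only complete graphs and odd cycles survive the lifting back to $G$.
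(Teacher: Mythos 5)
Your plan has a genuine gap at its acknowledged ``hard part,'' the case $k\geq 4$, and the paper closes it with an idea your proposal never reaches. The paper does not generalize the compensation identity of Remark~\ref{remark}, nor does it induct on $k$: it observes that for every $(k-1)$-subset $T\in\mathcal{P}_{k-1}(V(G))$ the set $S_T$ of pairs not resolved by $T$ is non-empty (else $\dime(G)\leq k-1$), and that $S_T\cap S_{T'}=\emptyset$ for $T\neq T'$ (arguing as in Lemma~\ref{keylemma}(a): a shared unresolved pair would produce a $k$-subset that is not a resolving set). The injection $T\mapsto S_T$ then gives $\binom{n}{k-1}\leq\binom{n}{2}$, which forces $k\in\{1,2,3,n-1,n,n+1\}$, hence $k=n-1$ for $k\geq 4$, and $G\cong K_n$ as the unique graph with $\dime(G)=n-1$. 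Your two proposed substitutes do not work as stated. The inductive route (delete or contract a vertex to get a randomly $(k-1)$-dimensional structure) has no supporting lemma, and none is plausible: removing a vertex can destroy both the lower bound $\dime(G)=k$ and the upper bound $\res(G)=k$ simultaneously, and you give no mechanism for controlling either. The ``diameter $\geq 2$ forces a cycle, oddness from antipodal pairs'' regime is actually vacuous for $k\geq 4$, since every cycle has metric dimension at most $2$; what must be shown is that no graph of diameter $\geq 2$ exists at all in this range, which is exactly what the binomial inequality delivers and your plan does not.

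A secondary weakness is the $k=3$ case. Ruling out $n=7$ is immediate (parity of degree sums), but your elimination of $n=10$ rests on either an ``unspecified combinatorial constraint'' or a finite enumeration of the cubic graphs on $10$ vertices, neither of which you carry out; there are $19$ connected cubic graphs on $10$ vertices, so the brute-force fallback is feasible but is precisely the casuistic analysis the paper is designed to avoid. The paper instead takes a diametral pair $u,v$, notes via Lemma~\ref{landmarks} that $v$ is at distance $d(G)$ or $d(G)-1$ from each of $u,u_1,u_2,u_3$ (the closed neighbourhood of $u$), so $v$ lies in at least two of the six sets $\mathcal{A}_{ij}$ of vertices equidistant from a pair, while $u$ lies in three of them; since Lemma~\ref{keylemma}(a) caps each $\mathcal{A}_{ij}$ at exactly two vertices, a count gives $n<10$, leaving only $n=4$ and $G\cong K_4$. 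So your skeleton matches the paper through Lemma~\ref{regularity}, but both of the remaining steps --- the $n=10$ exclusion and, decisively, all of $k\geq 4$ --- are missing their key arguments.
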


\begin{proof}
If $G$ is isomorphic to a complete graph or an odd cycle, it is
straightforward to prove that $G$ is randomly $k$-dimensional.

Suppose now that $G$ is a graph of order $n$ satisfying
$\dime(G)=\res(G)=$ $k$. We can assume $k\geq 3$ (as it was said
before the case $k\leq 2$ is proved in \cite{chrom}, obtaining
the complete graphs $K_1$ and $K_2$ (for $k=1$) and odd cycles
(for $k=2$)). Suppose first that $k=3$ and so
$\dime(G)=\res(G)=3$. By Lemma~\ref{regularity}, $G$ is 3-regular
and $n\in\{4,7,10\}$. We shall now prove that $n=4$.

Clearly, $n\neq 7$ since there is no 3-regular graph with 7
vertices. Consider now two vertices
 $u, v \in V(G)$ so that $d(u,v)=d(G)$, where $d(G)$
denotes the diameter of $G$. Let $N_1(u)=\{u_1,u_2,u_3\}$. By
Lemma~\ref{landmarks}, the distance from $v$ to every vertex of
the set $\{u,u_1,u_2,u_3\}$ is either $d(G)$ or $d(G)-1$. Hence,
$v$ belongs to at least two of the sets $\mathcal{A}_{ij}$
defined as in the proof of Claim 1 but for the vertices
$u,u_1,u_2,u_3$. By Lemma \ref{keylemma}(a), each set contains
exactly two vertices of $G$ and $u$ belongs to three of them.
This gives $n<10$ and so
 $n=4$ which implies that $G$ is isomorphic to $K_4$ (the only
3-regular graph with 4 vertices).

Suppose now that $k\geq 4$ and assume $\dime(G)=\res(G)=$ $k$.
Arguing as in the proof of Lemma~\ref{keylemma}(a) we have that
for every $T\in {\cal{P}}_{k-1}(V(G))$, the non-empty set
$$S_T=\{\{x,y\}\, | \, T \,  {\rm does \, not \, resolve} \,
\{x,y\}\}\subset {\cal{P}}_{2}(V(G))$$ verifies that $S_T \cap
S_{T'}=\emptyset$ whenever $T\neq T'$. Therefore
$|{\cal{P}}_{k-1}(V(G))|\leq |{\cal{P}}_{2}(V(G))|$, i.e.,
$$\binom{n}{k-1}\leq \binom{n}{2} \Longrightarrow k\in
\{1,2,3,n-1,n,n+1\}.$$ Hence $k=n-1$ since $4\leq k=\dim(G)$. This
 implies that $G$ is isomorphic to the complete
graph $K_{n}$ which is the only graph verifying that
$\dime(G)=k=n-1$.
\end{proof}

\section{Realization of graphs}

\subsection{Realization of the metric dimension and the upper
dimension}

This subsection is devoted to prove in the affirmative
Conjecture~\ref{conjecture}. In order to do this, we  compute the
upper dimension of two families of graphs for which the metric
dimension is easily obtained. These graphs are constructed from
the grid graphs attaching at the origin either a triangle or a
number of pendant vertices. We start with some notation and
technical lemmas.

Let $G_{\ell}$ be the $2$-dimensional grid graph of size
$\ell\times \ell$ with $\ell\geq 2$, whose vertex set is the
cartesian product $[0,\ell-1]\times [0,\ell-1]$ and distances
given by $d((x_1,x_2),(y_1,y_2))=|x_1-y_1|+|x_2-y_2|$. We shall
use $x_1,x_2$ to indicate the coordinates of a vertex $x\in
V(G_{\ell})$ (analogously $y=(y_1,y_2)$, $z=(z_1,z_2)$,
$r=(r_1,r_2)$, etc.) The following sets of vertices are called
\emph{quadrants of} $x\in V(G_{\ell})$:  $$Q_1(x)=\{y\in
V(G_{\ell}) \, | \, y_1\geq x_1,y_2\geq x_2\}, \, Q_2(x)=\{y\in
V(G_{\ell}) \, | \, y_1\leq x_1,y_2\geq x_2\},$$ $$Q_3(x)=\{y\in
V(G_{\ell})\, | \, y_1\leq x_1,y_2\leq x_2\}, \, Q_4(x)=\{y\in
V(G_{\ell})\, | \, y_1\geq x_1,y_2\leq x_2\},$$ and the  sets
$D_i=\{x\in V(G_{\ell})\, | \, x_1+x_2=i\}$ for $0\leq i \leq
\ell-1$ are the {\it diagonals} of $G_{\ell}$ (see Figure
\ref{quadrants}(a)). A pair of vertices $\{x,y\}$ is said to be a
\emph{diagonal pair} if $x,y\in D_i$ for some $i$. Note that a
quadrant $Q_i(x)$ might be equal to $\{x\}$ and there is a total
order $<_i$ in each diagonal $D_i$ (or simply ''$<$" when no
confusion can arise) given by
\begin{center}
\begin{tabular}{ccc}
$x<_i y$& $\Longleftrightarrow$&  $x_1<y_1$.
\end{tabular}
\end{center}
In the sequel, we shall assume without loss of generality that
the order of the two elements of a diagonal pair $\{x,y\}$ is
$x<y$ (analogously $r<s$ for $\{r,s\}$ or $t<z$ for $\{t,z\}$).

Let $R(x,y)$ be the set of vertices of $G_{\ell}$ that resolve the
pair $\{x,y\} \subset V(G_{\ell})$, and let $S$ be a resolving set
of $G_{\ell}$. Note that the set $R(x,y)\cap S$ is non-empty for
every pair $\{x,y\}$.

\begin{lemma}\label{lemma1}
Let $\{x,y\}$ be a diagonal pair such that $d(x,y)=2$. Then,
$$R(x,y)=Q_2(x)\cup Q_4(y).$$
\end{lemma}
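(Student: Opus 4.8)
The plan is to verify the set equality by a direct coordinate computation, exploiting the fact that the Manhattan distance on $G_{\ell}$ splits as a sum over the two coordinates. First I would fix coordinates for the diagonal pair: since $\{x,y\}$ is a diagonal pair with $x<y$ and $d(x,y)=2$, writing $x=(a,b)$ forces $y=(a+1,b-1)$, because keeping $x_1+x_2=y_1+y_2$ while attaining distance exactly $2$ requires each coordinate to change by one unit in opposite directions. For an arbitrary vertex $v=(v_1,v_2)$ I would then write
$$d(v,x)-d(v,y)=\bigl(|v_1-a|-|v_1-a-1|\bigr)+\bigl(|v_2-b|-|v_2-b+1|\bigr),$$
so that deciding whether $v$ resolves $\{x,y\}$ reduces to the sign of a sum of two independent one-dimensional differences.

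The key observation is that each of these differences is constant and equal to $\pm 1$ on either side of its threshold, since there is no intermediate integer between $a$ and $a+1$, nor between $b-1$ and $b$. Concretely, the first term equals $-1$ when $v_1\le a$ and $+1$ when $v_1\ge a+1$, while the second term equals $-1$ when $v_2\ge b$ and $+1$ when $v_2\le b-1$. These two thresholds partition the grid into four regions according to the sign pattern.

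I would then examine the four sign patterns. When the two signs agree, namely $v_1\le a,\,v_2\ge b$ or $v_1\ge a+1,\,v_2\le b-1$, the difference is $\pm 2\neq 0$ and $v$ resolves the pair; when the signs disagree the difference vanishes and $v$ does not resolve it. Reading off the two resolving regions, $\{v:v_1\le x_1,\,v_2\ge x_2\}$ is exactly $Q_2(x)$ and $\{v:v_1\ge y_1,\,v_2\le y_2\}$ is exactly $Q_4(y)$ (using $y_1=a+1$ and $y_2=b-1$), which yields $R(x,y)=Q_2(x)\cup Q_4(y)$.

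The computation is essentially routine; the only point requiring care is the integrality and boundary bookkeeping, that is, assigning the thresholds at $v_1=a$ versus $a+1$ and at $v_2=b-1$ versus $b$ to the correct side. Since these thresholds split each coordinate cleanly, the four regions form a partition of $V(G_{\ell})$, so the two resolving regions are recovered without overlap and the two non-resolving regions are cleanly excluded. I expect no substantive obstacle beyond this.
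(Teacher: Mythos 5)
Your proof is correct, and it takes a genuinely different route from the paper's. The paper argues with shortest paths: for $u\in Q_2(x)$ there is a shortest $u$-$y$ path through $x$, so $d(u,y)=d(u,x)+2$; and for $u$ outside $Q_2(x)\cup Q_4(y)$ it exhibits shortest paths from $u$ to both $x$ and $y$ passing through one of the corner vertices $z=(x_1,y_2)$ or $\tilde{z}=(y_1,x_2)$, each equidistant from $x$ and $y$, whence $d(u,x)=d(u,y)$. You instead compute $d(v,x)-d(v,y)$ directly from the $\ell_1$ distance formula that the paper gives for $G_{\ell}$, splitting it into two coordinate-wise terms, each constant $\pm 1$ on either side of an integer threshold, and reading the conclusion off the four sign patterns. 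Your bookkeeping checks out: the normalization $y=(a+1,b-1)$ does follow from the diagonal condition together with $d(x,y)=2$ and the convention $x<y$; the two sign-agreement regions are exactly $Q_2(x)$ and $Q_4(y)$ under the paper's quadrant definitions; and integrality ensures the four regions partition $V(G_{\ell})$, so the case analysis is exhaustive. Your computation is more self-contained and leaves nothing implicit, whereas the paper's ``clearly, there are two shortest paths $P_1,P_2$'' is left to the reader's geometric intuition. What the paper's formulation buys is reuse: the corner-vertex argument with $z$ and $\tilde{z}$ is invoked again in the proof of Lemma~\ref{lemma2} for diagonal pairs at distance greater than $2$, where a coordinate-wise sign analysis would be messier since the two terms can then cancel at many more lattice points (precisely the extra non-resolving vertices $(x_1+j,y_2+j)$ appearing there). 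Both arguments establish the same partition of the grid into two resolving quadrants and two non-resolving regions.
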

\begin{proof}
 Every vertex $u\in
Q_2(x)$ has a shortest $u$-$y$ path through $x$  and so
$d(u,y)=d(u,x)+d(x,y)=d(u,x)+2$. Thus, $u$ resolves $\{x,y\}$
(analogous for $u\in Q_4(y)$).

Let $u\in V(G_{\ell})\setminus(Q_2(x)\cup Q_4(y))$, $z=(x_1,y_2)$
and $\tilde{z}=(y_1,x_2)$. Clearly, there are two shortest paths
$P_1, P_2$ joining $u$ to $x$ and $u$ to $y$, respectively, such
that either $z\in P_1, P_2$ or $\tilde{z}\in P_1, P_2$ (see
Figure \ref{quadrants}(b)). Since $z,\tilde{z}$ do not resolve
the pair $\{x,y\}$ then $u \notin R(x,y)$ .
\end{proof}

A pair $\{x,y\}$ is said to be \emph{$S$-unique} if there is a
unique vertex $u \in S$ resolving $\{x,y\}$, i.e., $R(x,y)\cap
S=\{u\}$. The vertex $u$ is called the \emph{associated vertex of
the pair} $\{x,y\}$.  The following remark is straightforward.

\begin{figure}
\begin{center}
\begin{tabular}{ccc}
\includegraphics[width=0.4\textwidth]{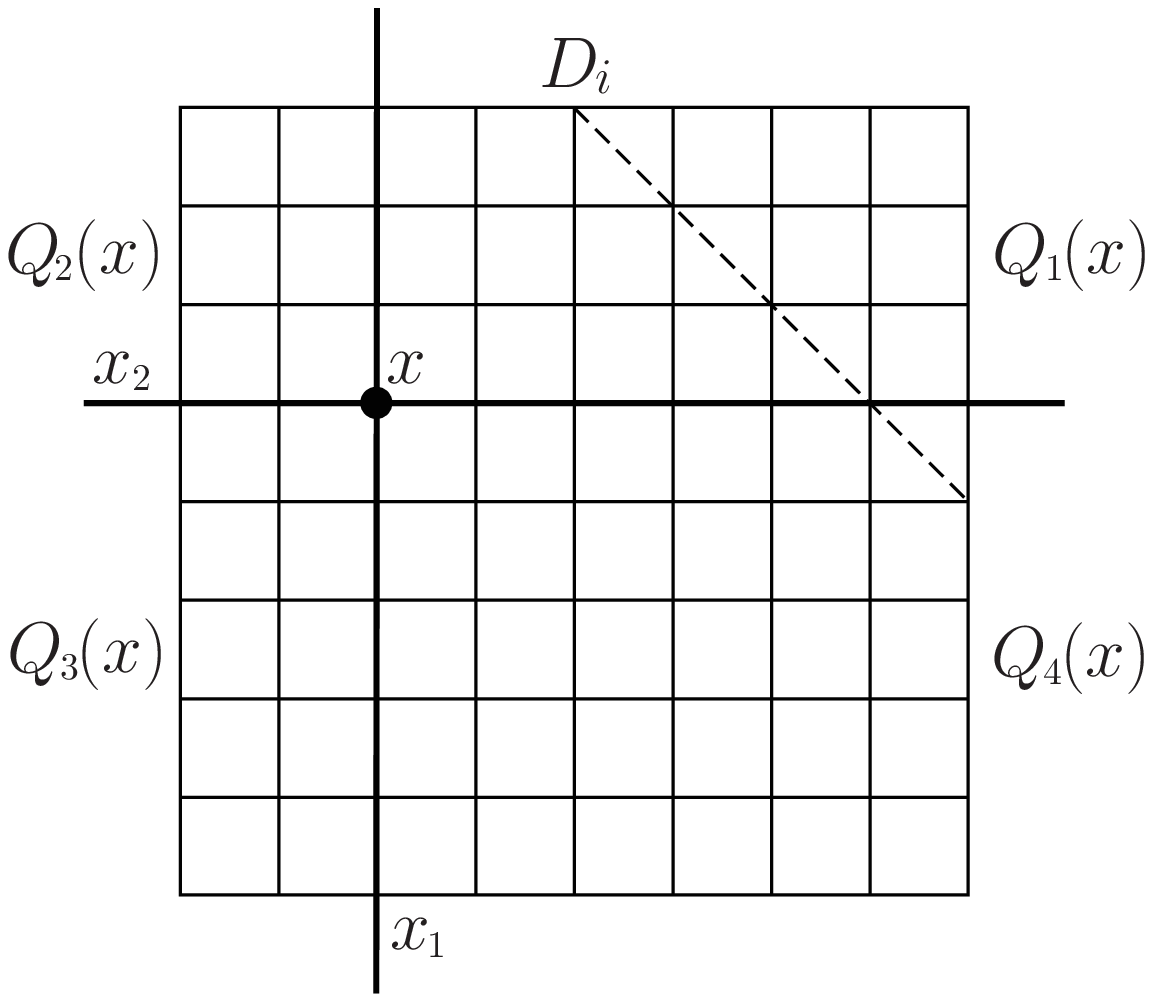}
&&
\includegraphics[width=0.4\textwidth]{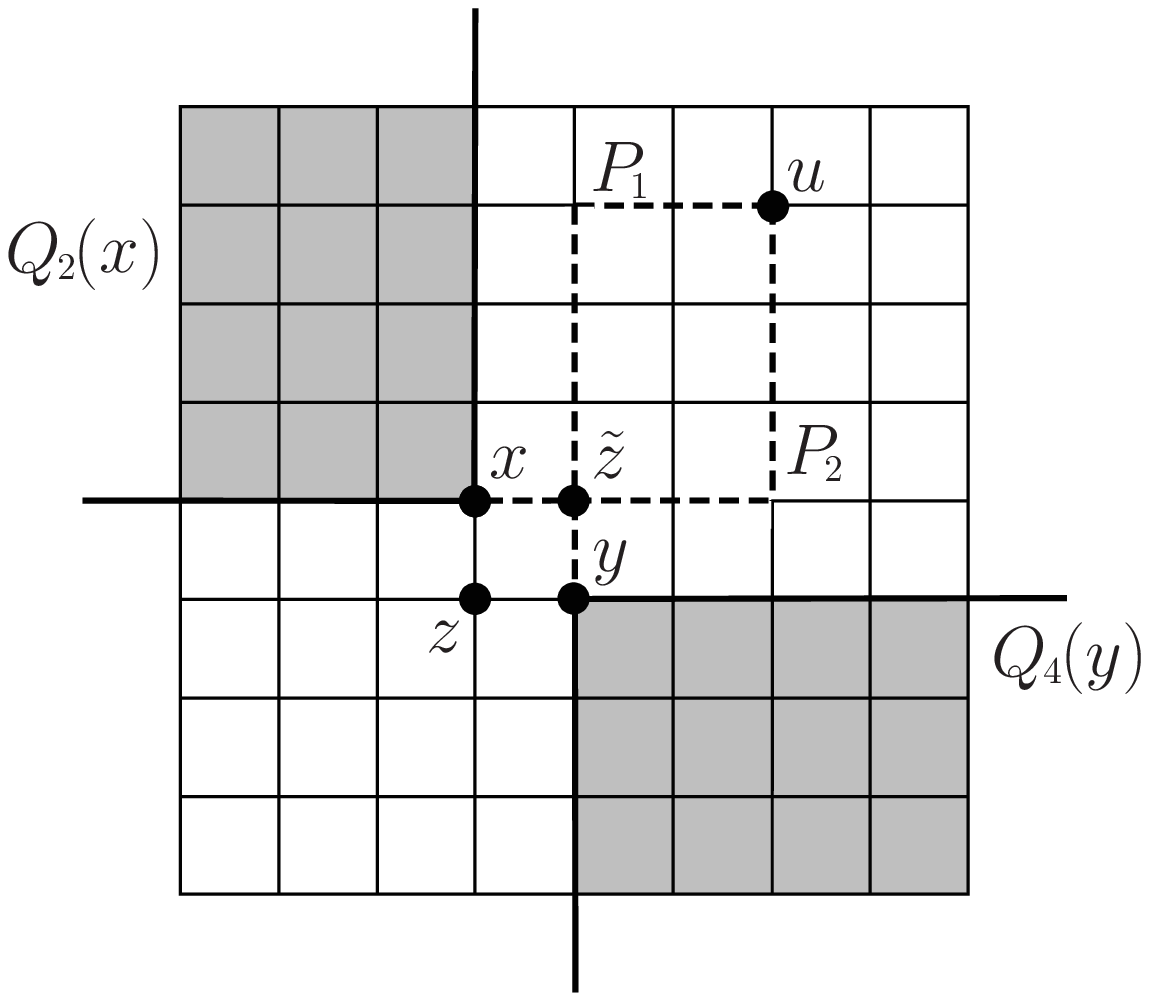}\\
(a) &   &  (b) \\
\end{tabular}
\end{center}
\caption{(a) Quadrants of $x$ and a diagonal $D_i$, (b) The
shadowed region is $R(x,y)=Q_2(x)\cup Q_4(y)$. The dotted edges
form the two shortest paths $P_1, P_2$.}\label{quadrants}
\end{figure}

\begin{remark} \label{contencion} Let $\{x,y\}$ be an $S$-unique pair with associated vertex $u$. If there is a pair $\{r,s\}$
such that $R(r,s)\subseteq R(x,y)$ then $\{r,s\}$ is $S$-unique
with associated vertex $u$.

\end{remark}

\begin{lemma}\label{lemma2}
Let $\{x,y\}=\{(x_1,x_2),(y_1,y_2)\}$ be an $S$-unique diagonal
pair with associated vertex $u$ such that $d(x,y)>2$. Then there
exist $y_1-x_1$ $S$-unique diagonal pairs $\{r,s\}$ with
associated vertex $u$ and $d(r,s)=2$.
\end{lemma}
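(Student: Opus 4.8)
The plan is to reduce the statement, via Remark~\ref{contencion}, to a purely metric containment between resolving sets and then to exhibit the required $y_1-x_1$ distance-$2$ pairs explicitly. Write $k=y_1-x_1$; since $\{x,y\}$ is a diagonal pair with $x<y$ we also have $x_2-y_2=k$, so $d(x,y)=2k$ and the hypothesis $d(x,y)>2$ means $k\ge 2$. If I can produce $k$ \emph{distinct} distance-$2$ diagonal pairs $\{r,s\}$ with $R(r,s)\subseteq R(x,y)$, then Remark~\ref{contencion} immediately gives that each of them is $S$-unique with associated vertex $u$, which is exactly the conclusion (the fact that $u$ itself resolves $\{r,s\}$ comes for free, since $S$ resolves $\{r,s\}$ and the only member of $S\cap R(x,y)$ is $u$). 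So the whole problem becomes: find $k$ distance-$2$ diagonal pairs whose resolving sets lie inside $R(x,y)$.

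Next I would guess the pairs. The non-resolved region of $\{x,y\}$ contains a ``staircase'' of diagonal points running from $(x_1,y_2)$ to $(y_1,x_2)$, and the natural candidates are the distance-$2$ pairs that straddle successive steps of this staircase. Concretely, for $a=0,1,\dots,k-1$ I set
\[
 r_a=(x_1+a,\ y_2+a+1),\qquad s_a=(x_1+a+1,\ y_2+a).
\]
A direct check shows each $\{r_a,s_a\}$ is a diagonal pair (both coordinates sum to $x_1+y_2+2a+1$) at distance $2$, that $r_a<s_a$, that the $k$ pairs are pairwise distinct, and that all coordinates lie in $[0,\ell-1]$ because $0\le a\le k-1$ keeps them inside the bounding box determined by $x$ and $y$. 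This choice is the step I expect to be the real obstacle: the verification below in fact \emph{forces} this offset (one finds the second coordinate of $r_a$ must exceed the first-coordinate shift by exactly $1$), so arriving at it requires understanding precisely where $R(x,y)$ fails; a wrong guess, e.g. taking consecutive pairs on the diagonal $D_i$ of $x$ and $y$, produces pairs whose resolving sets are \emph{not} contained in $R(x,y)$.

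For the verification I would introduce the signed distance difference $F(u)=d(u,x)-d(u,y)$. Because distances in $G_{\ell}$ are $\ell_1$, $F$ splits as $F(u)=h_1(u_1)+h_2(u_2)$ with $h_1(t)=|t-x_1|-|t-y_1|$ non-decreasing and $h_2(t)=|t-x_2|-|t-y_2|$ non-increasing, each valued in $[-k,k]$; a vertex resolves $\{x,y\}$ exactly when $F(u)\neq 0$. By Lemma~\ref{lemma1}, $R(r_a,s_a)=Q_2(r_a)\cup Q_4(s_a)$, so it suffices to show $F\neq 0$ throughout these two quadrants. On $Q_2(r_a)=\{u_1\le x_1+a,\ u_2\ge y_2+a+1\}$ the monotonicity of $h_1,h_2$ forces $F$ to attain its maximum at the corner $(x_1+a,\ y_2+a+1)$, where a one-line computation gives $F=(2a-k)+(k-2a-2)=-2$; hence $F\le -2<0$ on all of $Q_2(r_a)$. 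Symmetrically, on $Q_4(s_a)$ the minimum of $F$ is attained at $(x_1+a+1,\ y_2+a)$ and equals $(2a+2-k)+(k-2a)=2$, so $F\ge 2>0$ there. In both quadrants $F\neq 0$, whence $R(r_a,s_a)\subseteq R(x,y)$.

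Finally I would collect the pieces: the $k$ pairs $\{r_a,s_a\}$ are distinct distance-$2$ diagonal pairs with $R(r_a,s_a)\subseteq R(x,y)$, so by Remark~\ref{contencion} each is $S$-unique with the same associated vertex $u$, giving the required $y_1-x_1$ pairs. The only genuinely delicate point is the identification of the pairs; once they are in hand, the containment reduces to a short monotonicity-plus-corner argument that neatly sidesteps a four-way case analysis on the position of $u$ relative to the two coordinate intervals.
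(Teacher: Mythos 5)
Your proof is correct and follows essentially the same route as the paper: you exhibit exactly the same distance-$2$ pairs $r^j=(x_1+j,\,y_2+j+1)$, $s^j=(x_1+j+1,\,y_2+j)$ for $0\leq j<y_1-x_1$, and conclude via Lemma~\ref{lemma1} and Remark~\ref{contencion}. The only difference is in verifying $R(r^j,s^j)\subseteq R(x,y)$: the paper computes $R(x,y)$ explicitly as the complement of $Q_3(z)\cup Q_1(\tilde z)$ together with the interior staircase vertices (in fact showing the union of the $R(r^j,s^j)$ equals $R(x,y)$), whereas you prove only the needed containment directly by a clean monotonicity-and-corner evaluation of $d(u,x)-d(u,y)$, which is a perfectly valid (and arguably more self-contained) check.
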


\begin{proof}
A similar argument as in the proof of Lemma~\ref{lemma1},
considering $z=(x_1,y_2)$ and $\tilde{z}=(y_1,x_2)$ gives that
every vertex $u\in Q_3(z)\cup Q_1(\tilde{z})$ does not resolve
the pair $\{x,y\}$. We have to add the vertices $(x_1+j, \,
y_2+j)$ with $0< j < y_1-x_1$ which clearly do not resolve the
pair $\{x,y\}$ either (see Figure \ref{pairs}(a)). Thus, the
expression of $R(x,y)$ for vertices at distance bigger than 2  is
$$R(x,y)=V(G_{\ell})\setminus (Q_3(z)\cup Q_1(\tilde{z})\cup
\{(x_1+j,y_2+j) \, | \, 0< j < y_1-x_1\}).$$ This set can also be
expressed as follows: $$R(x,y)=\bigcup_{\small{0\leq j <
y_1-x_1}} R(r^j,s^j)$$ where $r^j=(x_1+j,y_2+j+1)$,
$s^j=(x_1+j+1,y_2+j)$ and $d(r^j,s^j)=2$ (see Figure
\ref{pairs}(b)). Since $R(r^j,s^j) \subseteq R(x,y)$, by Remark
\ref{contencion}, the result holds.
\end{proof}

Two diagonal pairs $\{x,y\}$, $\{r,s\}$ with $d(x,y)=d(r,s)=2$ are
said to be \emph{in the same row} if $x_2=r_2$ and $y_2=s_2$.
Analogously, they are \emph{in the same column} if $x_1=r_1$ and
$y_1=s_1$.

\begin{figure}
\begin{center}
\begin{tabular}{ccc}
\includegraphics[width=0.4\textwidth]{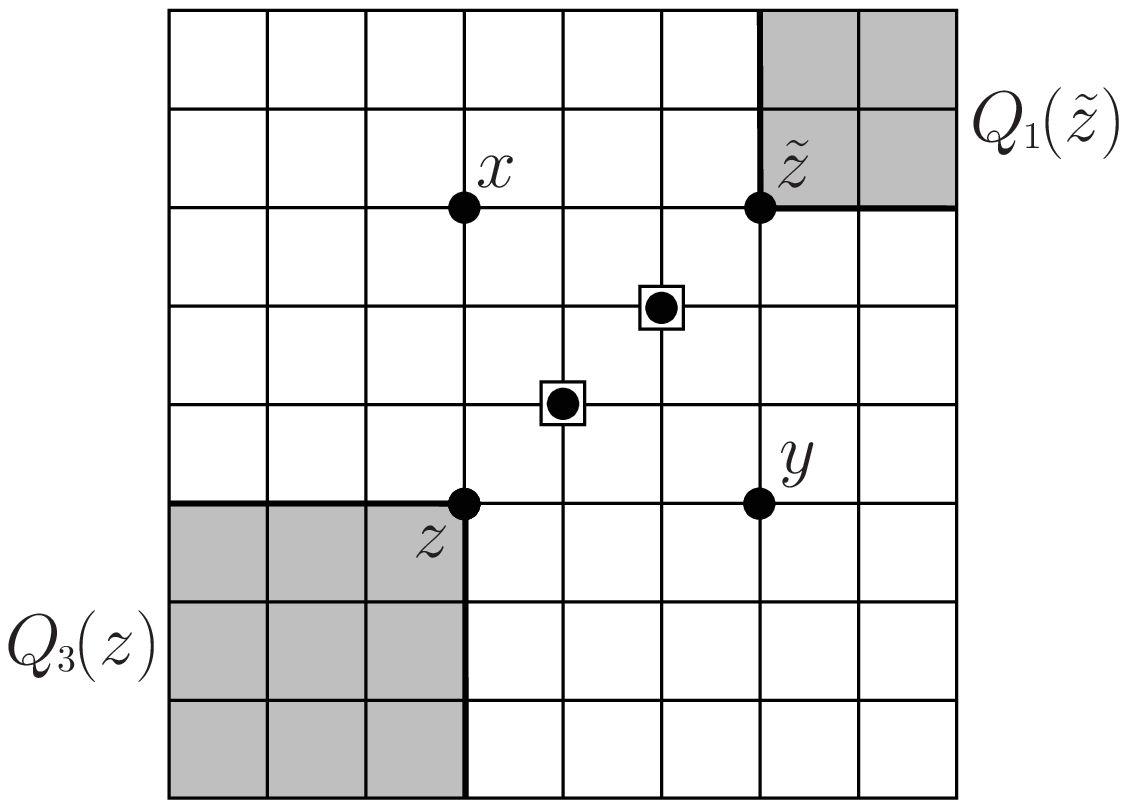}
&&
\includegraphics[width=0.28\textwidth]{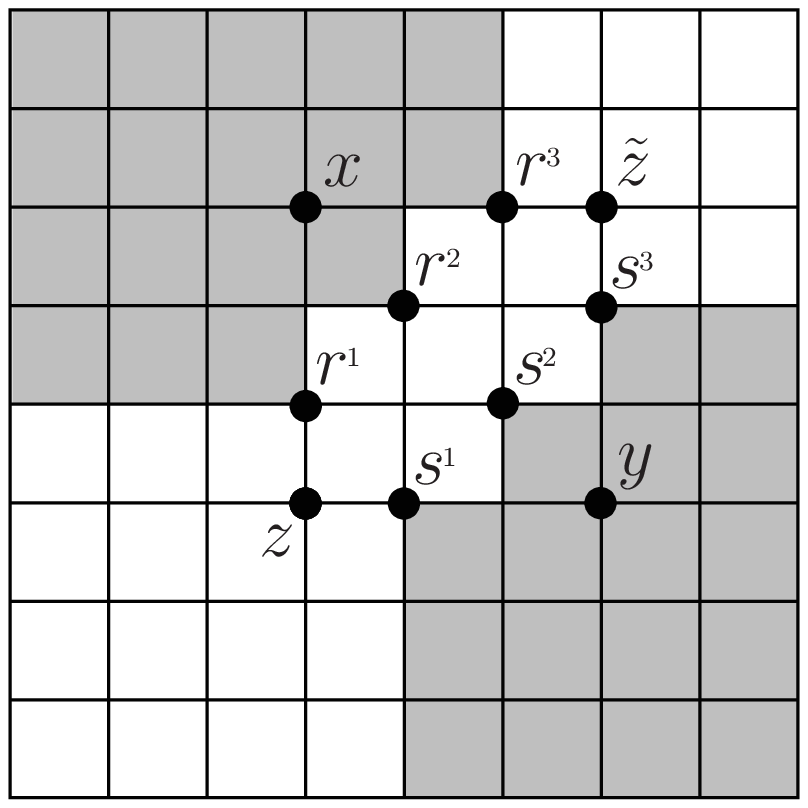}\\
(a) &   &  (b) \\
\end{tabular}
\end{center}
\caption{(a) All the vertices in the shadowed region plus the two
squared vertices do not resolve the pair $\{x,y\}$, (b) The
shadowed region illustrates $R(x,y)$.}\label{pairs}
\end{figure}

\begin{lemma}\label{lemma3}
 Let $\{x,y\}$ be an
$S$-unique diagonal pair with associated vertex $u$ such that
$d(x,y)=2$. If there exist two $S$-unique diagonal pairs
$\{r,s\}$, $\{t,z\}$ in the same row (column) than $\{x,y\}$ with
associated vertices, respectively, $v$ and $w$ and $u\neq v,w$
then $v=w$.
\end{lemma}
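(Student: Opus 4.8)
The plan is to argue entirely from the explicit description of resolving sets in Lemma~\ref{lemma1}, handling only the ``same row'' case; the ``same column'' case will then follow by applying the isometry $(x_1,x_2)\mapsto (x_2,x_1)$ of $G_{\ell}$, which preserves distances, sends diagonal pairs to diagonal pairs, interchanges the row and column conditions, and carries $S$-unique pairs and their associated vertices to $S$-unique pairs and their associated vertices. Since $\{x,y\}$ is a diagonal pair with $d(x,y)=2$ and $x<y$, I would write $x=(a,b)$ and $y=(a+1,b-1)$, so that by Lemma~\ref{lemma1} the set $R(x,y)=Q_2(x)\cup Q_4(y)$ splits $V(G_{\ell})$ into a top-left block (vertices with second coordinate $\geq b$ and first coordinate $\leq a$) and a bottom-right block (second coordinate $\leq b-1$ and first coordinate $\geq a+1$). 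Being in the same row, $\{r,s\}$ and $\{t,z\}$ have resolving sets of the same shape with the column threshold $a$ replaced by some values $c$ and $e$, and $c\neq e$ because the two pairs are distinct.

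I would first record the consequences of $S$-uniqueness. All of $u,v,w$ lie in $S$; since $R(x,y)\cap S=\{u\}$ with $u\neq v,w$, neither $v$ nor $w$ belongs to $R(x,y)$; and since $R(r,s)\cap S=\{v\}$ and $R(t,z)\cap S=\{w\}$ with $u\neq v,w$, the vertex $u$ lies in neither $R(r,s)$ nor $R(t,z)$.

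Next I would locate the three associated vertices. Suppose first $u_2\geq b$, so $u\in Q_2(x)$ and $u_1\leq a$. On the half-plane of vertices with second coordinate $\geq b$, membership in $R(r,s)$ is exactly the condition ``first coordinate $\leq c$''; hence $u\notin R(r,s)$ forces $u_1>c$, so $c<a$, and likewise $e<a$. Now if $v$ lay in the top half it would satisfy $v_1\leq c<a$ and $v_2\geq b$, hence $v\in Q_2(x)\subseteq R(x,y)$, forcing $v=u$, a contradiction; therefore $v\in Q_4(s)$, so $v_1\geq c+1$, and $v\notin R(x,y)$ then gives $v_1\leq a$. The identical argument places $w$ with $e+1\leq w_1\leq a$ and $w_2\leq b-1$. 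The case $u_2\leq b-1$ is symmetric: the roles of the two blocks are interchanged, all column inequalities are reversed, one gets $c,e>a$, and $v,w$ fall in the top block with $a+1\leq v_1\leq c$ and $a+1\leq w_1\leq e$.

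Finally I would conclude by a containment between resolving sets. In the case $u_2\geq b$, relabel the auxiliary pairs so that $c<e$; then $w_1\geq e+1>c$ together with $w_2\leq b-1$ gives $w\in Q_4(s)\subseteq R(r,s)$, and since $w\in S$ while $R(r,s)\cap S=\{v\}$, this forces $w=v$. The case $u_2\leq b-1$ is handled the same way after ordering the pairs so that the associated vertex under test lands in the other pair's resolving set. The step that needs care is precisely this ordering: because the two blocks are not symmetric under a single horizontal shift, one must test membership of the associated vertex of the pair with the more extreme column against the resolving set of the other pair, and never the reverse; checking that this containment genuinely holds is where the column bounds obtained in the previous step are used.
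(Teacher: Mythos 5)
Your proof is correct, and it takes a genuinely different route from the paper's, which disposes of the lemma in one line: after assuming the ordering $x_1<r_1<t_1$, it notes the containment $R(r,s)\subset R(x,y)\cup R(t,z)$ (the middle pair's resolving set lies in the union of the outer two), so $v\in\bigl(R(x,y)\cup R(t,z)\bigr)\cap S=\{u,w\}$ and $v\neq u$ forces $v=w$. You never invoke a three-set containment; instead you localize $u,v,w$ in coordinates: the case split on which block of $R(x,y)=Q_2(x)\cup Q_4(y)$ contains $u$, combined with $u\notin R(r,s)\cup R(t,z)$, shows that both auxiliary columns $c,e$ lie strictly on one side of $a$; then $v,w\notin R(x,y)$ pins $v$ and $w$ into the block opposite $u$ with the column bounds you state, and a single membership check ($w\in Q_4(s)\subseteq R(r,s)$ when $u_2\geq b$ and $c<e$, or $v\in Q_2(t)\subseteq R(t,z)$ in the mirrored case) finishes via $S$-uniqueness; I verified both checks against Lemma~\ref{lemma1} and they hold. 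Both arguments ultimately exploit the same nesting of quadrants along a row, but yours buys something the paper leaves implicit: the paper's ``Assume also that $x_1<r_1<t_1$'' silently discards the configurations in which $\{x,y\}$ is the middle or rightmost of the three pairs, whereas your derivation shows the middle configuration is incompatible with $u\neq v,w$ (it would place $u$ inside $R(r,s)\cup R(t,z)$), and your two cases on $u$ cover the remaining orderings automatically. A cleaner way to phrase your ``more extreme column'' heuristic, which is slightly misleading as stated, is that in both cases you test the associated vertex of the \emph{middle} pair of the three against the resolving set of the \emph{outermost} pair --- exactly the pointwise shadow of the paper's containment. The paper's version buys brevity and coordinate-freeness; yours is longer but makes the WLOG honest. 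Two minor points: if $c=e$ the auxiliary pairs coincide and $v=w$ is trivial, so distinctness costs nothing; and your transpose reduction for the column case is valid because $(x_1,x_2)\mapsto(x_2,x_1)$ is an automorphism of the square grid $G_{\ell}$ carrying diagonal pairs to diagonal pairs and columns to rows.
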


\begin{proof}
Suppose that the pairs $\{r,s\}$, $\{t,z\}$ are in the same row
(analogous for columns) than $\{x,y\}$, i.e., $x_2=r_2=t_2$ and
$y_2=s_2=z_2$. Assume also that $x_1<r_1 < t_1$. Clearly,
 $R(r,s)\subset R(x,y) \cup R(t,z)$  and so  $v=w$ since $v\neq u$.
\end{proof}

Now, we reach our main result in this subsection which answers in
the affirmative Conjecture \ref{conjecture}.

\begin{theorem}\label{ab}
For every pair $a, b$ of integers with $2 \leq a \leq b$, there exists a connected
graph $G$ with $\dime(G) = a$ and $\dime^+(G) = b$.
\end{theorem}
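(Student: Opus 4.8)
The plan is to realize every admissible pair $(a,b)$ by an explicit graph built from a grid $G_\ell$ with extra structure attached at the corner $(0,0)$, using two independent knobs: a number $p$ of attached pendant vertices, which will control the metric dimension, and the grid size $\ell$, which will control the gap $b-a$ through the upper dimension. I would work with two families, one obtained by attaching $p$ pendant vertices $w_1,\dots,w_p$ at the origin and one obtained by attaching (in addition) a triangle there; the pendants pin down $\dime(G)$, while $\ell$ and the presence or absence of the triangle together sweep out all values of $\dime^+(G)$.

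For the metric dimension (the easy half) I would first note that any two pendants $\{w_i,w_j\}$ are resolved only by $w_i$ or $w_j$, since every other vertex $v$ satisfies $d(v,w_i)=d(v,(0,0))+1=d(v,w_j)$. Hence every resolving set contains at least $p-1$ pendants, and these already encode the full distance-to-origin information. It then remains only to separate the \emph{diagonal pairs} of the grid, i.e.\ the pairs lying on a common $D_i$, which the origin cannot resolve; a single off-diagonal grid landmark (for instance a corner not on the main diagonal) resolves all of them. This gives $\dime(G)=(p-1)+1=p$, so choosing $p=a$ yields $\dime(G)=a$.

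The substantive work is the upper dimension, and this is where I would invoke Lemmas~\ref{lemma1}--\ref{lemma3}. In any minimal resolving set every vertex must be the associated vertex of some $S$-unique pair; including a $p$-th pendant would destroy minimality, so the pendant contribution is pinned at exactly $p-1$ essential vertices. For the remaining (grid) vertices, Lemma~\ref{lemma2} lets me reduce attention to $S$-unique diagonal pairs at distance $2$, whose resolving sets are the explicit quadrant unions $Q_2(x)\cup Q_4(y)$ of Lemma~\ref{lemma1}; Lemma~\ref{lemma3} then forces the associated vertices of such pairs sharing a row, or sharing a column, to coincide. Counting how many grid vertices can simultaneously be essential under these row/column constraints should yield a quantity $M(\ell)$ depending only on $\ell$, together with an explicit minimal resolving set attaining it, so that $\dime^+(G)=(p-1)+M(\ell)=(a-1)+M(\ell)$. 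Solving $M(\ell)=b-a+1$ then realizes $\dime^+(G)=b$.

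The main obstacle is the exact determination of $M(\ell)$, and within that the upper bound: showing that no minimal resolving set can make more than $M(\ell)$ grid vertices essential requires turning the purely local row/column restriction of Lemma~\ref{lemma3} into a single global packing bound, so that essential grid vertices cannot be concentrated more densely than the diagonal geometry allows. I expect $M(\ell)$ to grow with $\ell$ but to skip certain values, which is precisely the reason for the second family: attaching the triangle at the origin alters which diagonal pairs nearest the origin become $S$-unique and thereby shifts the attainable value of the gap $b-a$ by a fixed amount, so that the pendant family and the triangle family together cover every gap $b-a\ge 0$. The boundary case $a=b$ is handled by the smallest grid $G_2$, where only one diagonal pair exists and forces $M=1$, giving $\dime^+(G)=\dime(G)=a$.
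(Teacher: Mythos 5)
Your overall architecture matches the paper's (grids with structure attached at $(0,0)$, pendants controlling $\dime$, Lemmas \ref{lemma1}--\ref{lemma3} reducing minimal resolving sets to $S$-unique diagonal pairs at distance $2$), but there are two concrete errors plus a missing core computation. First, the claim $\dime(G)=p$ is false. With $p-1$ pendants in $S$ and $w_p$ omitted, the pairs $\{w_p,(0,1)\}$ and $\{w_p,(1,0)\}$ are resolved by no pendant (every pendant and every grid vertex sees $w_p$ as the origin shifted by one), and a grid vertex $u$ resolves $\{w_p,(0,1)\}$ only if $u_2\geq 1$ and $\{w_p,(1,0)\}$ only if $u_1\geq 1$, while by Lemma \ref{lemma1} any $u$ resolving the diagonal pair $\{(0,1),(1,0)\}$ lies in $Q_2((0,1))\cup Q_4((1,0))$, i.e., has $u_1=0$ or $u_2=0$. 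So your "single off-diagonal landmark" cannot do both jobs, the correct value is $\dime=p+1$, and you must take $p=a-1$ (the paper's $m=a-1$). This is fixable, but your assertion that the remaining pendants "already encode the full distance-to-origin information" is exactly the subtlety the paper's Claim 2 is written to handle.

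Second, and more seriously, your parity mechanism fails. In both of your families the grid contribution to a largest minimal resolving set is the same quantity $2\ell-3$, and attaching a triangle \emph{in addition to} the pendants raises $\dime$ by one as well (the pair $\{\alpha,\beta\}$ forces $\alpha$ or $\beta$ into $S$, while the obstruction above still costs an extra landmark), so for fixed $a\geq 3$ the gap $b-a$ has the \emph{same} parity in both families; the triangle does not "shift the attainable gap by a fixed amount" as you need. In the paper the triangle plays a different role: it is the substitute for pendants in the case $a=2$, where a single pendant cannot be forced into resolving sets but one of $\{\alpha,\beta\}$ can. The missing parity class is reached instead by a geometric modification, truncating $G_\ell$ to a rectangular $(\ell-2)\times(\ell-1)$ grid ($\tilde H_{\ell}$, $\tilde H_{\ell,m}$), which changes the number of rows versus columns and shifts the grid contribution by exactly one. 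Finally, your $M(\ell)$ is precisely the nontrivial content of the theorem: the paper proves $|S\setminus\{\alpha\}|\leq 2(\ell-1)$ via Lemmas \ref{lemma2} and \ref{lemma3}, sharpens this by one unit using the row containing $\{(0,1),(1,0)\}$, and exhibits an explicit extremal minimal resolving set along the main diagonal. You acknowledge this determination as "the main obstacle" and do not carry it out, so as written the proposal is a plan whose central quantitative step is unexecuted, in addition to the two errors above.
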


\begin{proof}

Let $H_{\ell}$ be the graph obtained from $G_{\ell}$, $\ell\geq
2$, by attaching a triangle at the vertex $(0,0)$, i.e.,
 $V(H_{\ell})=V(G_{\ell})\cup \{\alpha,\beta\}$ and
$E(H_{\ell})=E(G_{\ell})\cup
\{\{\alpha,\beta\},\{\alpha,(0,0)\},\{\beta,(0,0)\}\}$. Observe
that distances in $H_{\ell}$ behave as in $G_{\ell}$, except for
the new vertices $\alpha$ and $\beta$ for which
$d(\alpha,x)=d(\beta,x)=x_1+x_2+1$ for every $x=(x_1,x_2)\in
V(G_{\ell})$. Thus, the previous lemmas can be applied to the
graph $H_{\ell}$.

\

\noindent \emph{Claim 1. $\dime(H_{\ell})=2$ and
$\dime^+(H_{\ell})=2\ell-2$.}

\begin{proof}
It is well-known that $\dime (G_{\ell})=2$ being the set
$\{(0,0),(\ell-1,0)\}$ a metric basis (see for instance
\cite{landmarks}). This set can be adapted to a metric basis of
$H_{\ell}$ by considering $\{\alpha,(\ell-1,0)\}$. Hence,
$\dime(H_{\ell})=2$.

To prove that $\dim ^+(H_{\ell})\geq 2\ell-2$ one can easily
check that the set $$S=\{(x_1,x_2) \, | \, 1\leq x_1\leq \ell-2,
\, x_2=x_1,x_1+1\} \cup \{(0,1),\alpha\}$$ is a resolving set of
$H_{\ell}$ of size $2(\ell-1)$. Moreover, $S$ is minimal because
removing either a vertex $(x_1,x_1)$ or $(x_1,x_1+1)$ from $S$
gives that either the pair $\{(x_1,x_1), (x_1-1,x_1+1) \}$ or the
pair $\{(x_1,x_1+1), (x_1+1,x_1) \}$ is not resolved by any
element of $S$. Clearly, $(0,1)$ and $\alpha$ cannot be removed
from $S$. Figure \ref{metricbasis}(a) illustrates this minimal
resolving set of $H_{\ell}$.

We next prove that $\dim ^+(H_{\ell})\leq 2\ell-2$. Let $S$ be a
minimal resolving set of $H_{\ell}$ and consider the pair
$\{\alpha,\beta\}$ which is only resolved by either $\alpha$ or
$\beta$ and so we can assume that $\alpha\in S$ (otherwise
$\beta\in S$).

Since $S$ is minimal, every vertex $u\in S$ has an associated
$S$-unique pair, say $p(u)$. Observe that $\{\beta,(0,0)\}$ is
not an $S$-unique pair (every vertex of $G_{\ell}$ resolves it)
and so there is no vertex $u\in S$ so that
$p(u)=\{\beta,(0,0)\}$. Note also that $\alpha$ resolves all the
non-diagonal pairs of $G_{\ell}$. Hence, every vertex $u\in
S\setminus \{\alpha\}$ has an associated $S$-unique diagonal pair
$p(u)$. Moreover, by Lemma \ref{lemma2}, we can assume that the
elements of $p(u)$ are at distance 2. Thus, Lemma~\ref{lemma3}
says that $|S\setminus \{\alpha\}|\leq 2(\ell-1)$ and so we still
need to reduce the bound in one unit.

By Lemma \ref{lemma1}, $R((0,1),(1,0))=Q_2((0,1))\cup Q_4((1,0))=
\{(0,x_2)\, | \, 1 \leq x_2 \leq \ell-1\} \cup \{(x_1,0) \, | \, 1
\leq x_1 \leq \ell-1\}$. Assume that there is a vertex $v\in S\cap
Q_2((0,1))$ (analogous for $v\in S\cap Q_4((1,0))$ by rotating
the situation). Since all the pairs in the same row than
$\{(0,1),(1,0)\}$ are resolved by $v$ and $S$ is minimal, there
is no other vertex of $S$ associated to pairs in such row and so
$|S\setminus \{\alpha\}|\leq 2(\ell-2)+1$ which leads to $|S|\leq
2(\ell-2)+1+1=2\ell-2$, the expected bound.

\end{proof}

Consider now the graph $H_{\ell,m}$ obtained from $G_{\ell}$ by
attaching a set of $m\geq 2$ pendant
 vertices $\{\alpha_1,...,\alpha_m\}$  at $(0,0)$.

\

\noindent \emph{Claim 2.
 $\dime(H_{\ell,{\it m}})={\it m}+1$ and
 $\dime^+(H_{\ell,m})=m+2\ell-4$.}

\begin{proof}
As it was said before, the set $\{(0,0),(\ell-1,0)\}$ is a metric
basis of $G_{\ell}$ \cite{landmarks}. Thus, it can be easily
checked that the set $\{\alpha_1,...,\alpha_m,(\ell-1,0)\}$ is a
resolving set of $H_{\ell,{\it m}}$ which gives
$\dime(H_{\ell,{\it m}})\leq {\it m}+1$. To prove that
$\dime(H_{\ell,{\it m}})\geq {\it m}+1$ it suffices to show that
$|S|\geq m+1$ for every metric basis $S$.

A metric basis $S$ has to contain all the pendant vertices but at
most one. Suppose that $\{\alpha_1,...,\alpha_{m-1}\}\subset S$
and $\alpha_m\notin S$ (if $\{\alpha_1,...,\alpha_{m}\}\subset S$
the result clearly follows). Since no pendant vertex resolves the
pair $\{(0,1),(1,0)\}$ then there is a vertex, say  $u\in
R((0,1),(1,0))=Q_2((0,1)) \cup Q_4((1,0))$. But either the pair
$\{\alpha_m,(1,0)\}$ or the pair $\{\alpha_m,(0,1)\}$ is not
resolved by any vertex in the set
$\{\alpha_1,...,\alpha_{m-1},u\}$ and so
 $|S|\geq m+1$.

Mimicking the proof of Claim 1, only replacing $\alpha$ by
$\alpha_1,\ldots ,\alpha_{m-1}$ (compare Figures
\ref{metricbasis}(a) and \ref{metricbasis}(b)) it is proved that
$\dime^+(H_{\ell,m})=m+2\ell-4$. We omit it for the sake of
brevity.
\end{proof}

\begin{figure}
\begin{center}
\begin{tabular}{ccc}
\includegraphics[width=0.3\textwidth]{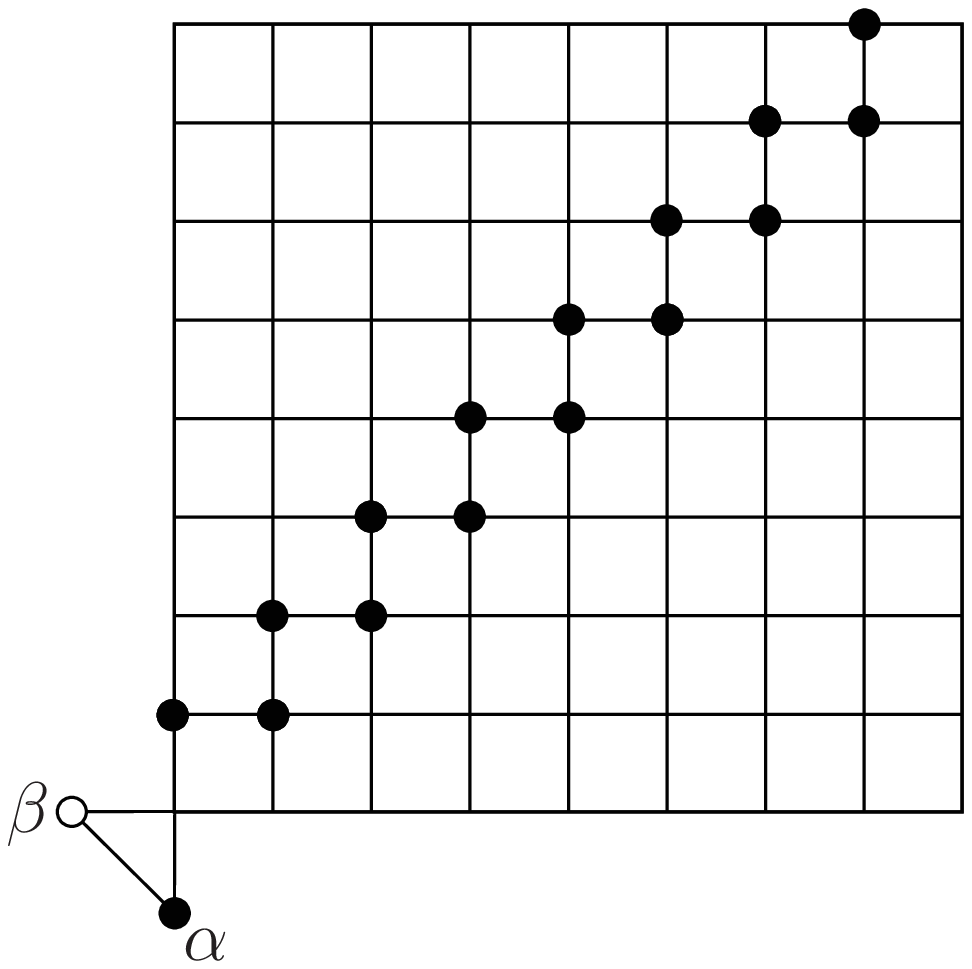}
&&
\includegraphics[width=0.33\textwidth]{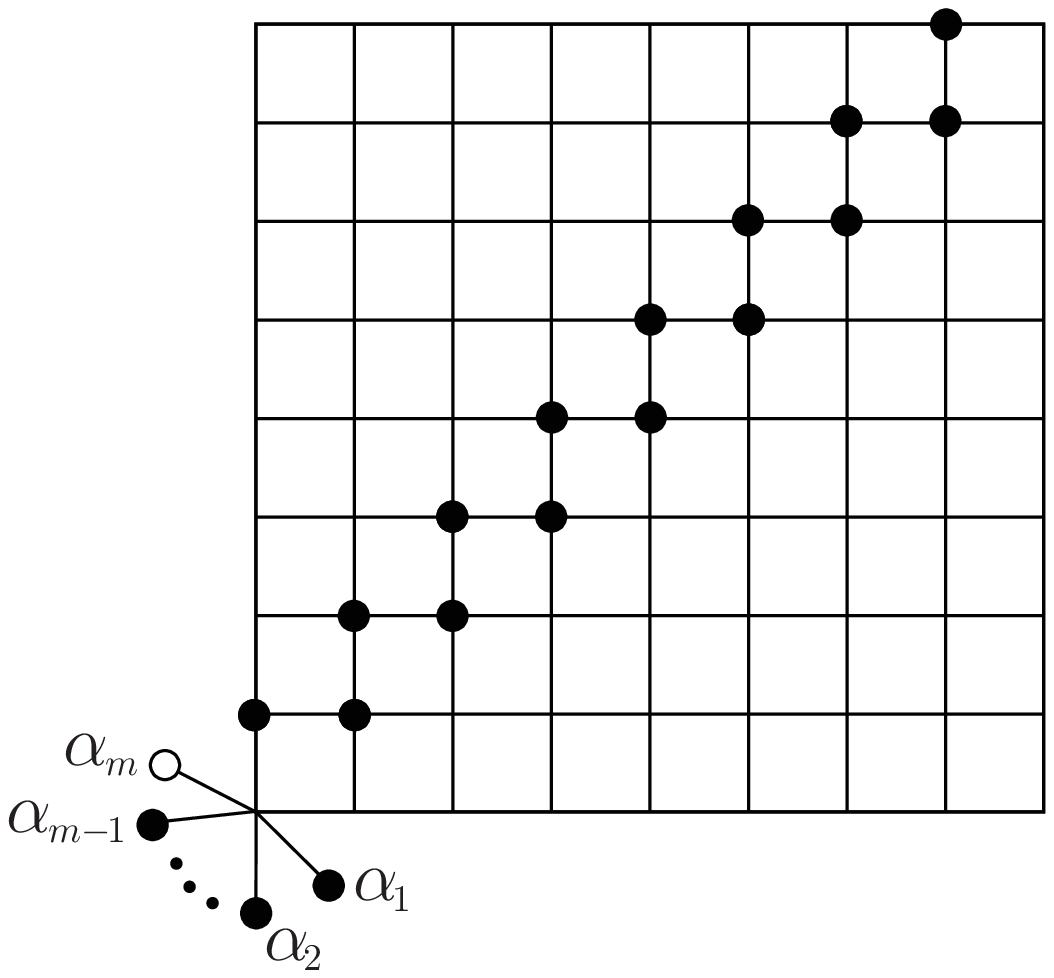}\\
(a) &   &  (b) \\
\end{tabular}
\end{center}
\caption{(a) A minimal resolving set of $H_{\ell}$ of size
$2(\ell-1)$, (b) A minimal resolving set of $H_{\ell,m}$ of size
$m+2\ell-4$. }\label{metricbasis}

\end{figure}

Claims 1 and 2 give the connected graph $G$ with $\dime(G) = a$
and $\dime^+(G) = b$ whenever $a=2$ and $b$ even ($G\cong
H_{\ell}$ for $\ell=(b+2)/2$) or $a>2$ and $b-a$ odd ($G\cong
H_{\ell,m}$ for  $\ell=2+(b-a+1)/2$ and $m=a-1$).

In order to obtain the graph $G$ in the remaining cases, we
modify slightly the graphs $H_{\ell}$ and $H_{\ell,m}$  by
removing the set of vertices $\{(x_1,x_2) \, | \, x_1=\ell-1 \}$.
Denote by  $\tilde H_{\ell}$ and $\tilde H_{\ell,m}$ the
resulting graphs. Note that a $(\ell-2)\times (\ell-1)$ grid, say
$\tilde{G}_{\ell}$, plays now the role of $G_{\ell}$ but all the
tools developed above can also be applied in this case. Hence,
one can follow the proofs of Claims 1 and 2 to compute the metric
dimension and the upper dimension of $\tilde H_{\ell}$ and
$\tilde H_{\ell,m}$. There are only three changes: (1) take the
set $\{(0,0), (\ell-2,0) \}$ as a metric basis of
$\tilde{G}_{\ell}$; (2) remove the vertex $(\ell-2,\ell-1)$ from
$S$ obtaining a minimal resolving set of size $2\ell-3$ (for
$\tilde H_{\ell}$) or $2\ell+m-5$ (for $\tilde H_{\ell,m}$); (3)
apply the column version of Lemma \ref{lemma3} to obtain
$|S\setminus \{\alpha\}|\leq 2(\ell-2)$ or $|S\setminus
\{\alpha_1, \ldots, \alpha_{m-1} \}|\leq 2(\ell-2)$ which
directly gives $|S|\leq 2\ell-3$ (for $\tilde H_{\ell}$) or
$|S|\leq 2\ell+m-5$ (for $\tilde H_{\ell,m}$). Thus, we have

\

\noindent \emph{Claim 3.  $\dime(\tilde H_{\ell})=2$,
$\dime(\tilde H_{\ell,{\it m}})={\it m}+1$, $\dime^+(\tilde
H_{\ell})=2\ell-3$ and $\dime^+(\tilde H_{\ell,m})=m+2\ell-5$.}

\

It gives the graph $G$ with $\dime(G) = a$ and $\dime^+(G) = b$
whenever $a=2$ and $b$ odd ($G\cong \tilde H_{\ell}$ for
$\ell=2+(b-1)/2$) or $a>2$ and $b-a$ even ($G\cong \tilde
H_{\ell,m}$ for $\ell=3+(b-a)/2$ and $m=a-1$).

\end{proof}

\subsection{Realization of the resolving number}

In Subsection 3.1, we have proved that any pair $a, b$ of integers
 such that $2\leq a \leq b$ is realizable as the metric
dimension and the upper dimension of a certain graph. Modifying
slightly
 the above constructions, one can easily prove that
these two integers are realizable as the metric dimension and the
upper dimension of an infinite family of graphs. It suffices to
replace the vertex $(0,0)$ in $G_{\ell}$ by a path of arbitrary
length. If the resulting graph plays the role of $G_{\ell}$ in
the study developed in the previous subsection, then the metric
dimension and upper dimension are preserved.

Theorem \ref{resfinite} below says that, unlike the metric
dimension and the upper dimension, no integer $a\geq 4$ is
realizable as an infinite family of graphs with resolving number
equal to $a$ (note that the path $P_2$ is the only graph with
resolving number $1$ but there are infinite families of graphs
with resolving number 2 and 3, concretely, odd cycles and paths
(for $a=2$) and even cycles (for $a=3$)). In order to prove this
result, we first relate the resolving number to the diameter of a
graph, which is of independent interest.

\begin{proposition}\label{relationships}
Let $G$ be a graph with diameter $d(G)$, girth $\g(G)$ and
resolving number $\res(G)\geq 3$. If $G$ is not an even cycle
then $d(G)\leq 3\res(G)-5.$
\end{proposition}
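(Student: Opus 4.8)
The plan is to recast the resolving number in terms of non-resolving sets. For a pair $\{x,y\}$ put
$$U(\{x,y\})=\{z\in V(G)\mid d(z,x)=d(z,y)\},$$
the set of vertices that fail to resolve $\{x,y\}$. A $k$-subset $W$ is \emph{not} a resolving set exactly when $W\subseteq U(\{x,y\})$ for some pair, so every $k$-subset resolves if and only if every such $U$ has at most $k-1$ elements. By the minimality built into the definition of $\res(G)$, this gives the clean identity
$$\res(G)=1+\max_{\{x,y\}\in\mathcal{P}_2(V(G))}|U(\{x,y\})|.$$
Consequently, writing $D=d(G)$, the desired inequality $D\leq 3\res(G)-5$ is equivalent to exhibiting a \emph{single} pair $\{x,y\}$ with $|U(\{x,y\})|\geq (D+2)/3$. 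So the whole problem reduces to producing one pair admitting roughly $D/3$ equidistant vertices.

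To build such a pair I would fix a diametral pair $u,v$ with $d(u,v)=D$ together with a shortest path $w_0=u,w_1,\ldots,w_D=v$. The basic tool is Lemma~\ref{landmarks}: for any vertex $z$ and any edge of this path the two endpoint-distances from $z$ differ by at most one, so $i\mapsto d(z,w_i)$ changes by at most one at each step. Using this I would look near the middle of the path for a pair $\{x,y\}$ whose distance-difference function $d(\cdot,x)-d(\cdot,y)$ vanishes on a positive fraction of the $w_i$. The cleanest situation is a branch point: a path vertex carrying two symmetric off-path neighbours, since then \emph{every} $w_i$ is automatically equidistant from those two neighbours and $U$ contains essentially the entire path, giving $|U|\approx D$, far more than needed. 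I expect the factor $3$ to arise in the worst case, where this symmetry is only partial and the equidistant vertices recur only once in every three consecutive positions of the path.

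The main obstacle is precisely the construction of this pair in full generality, because a graph can have large diameter while keeping all non-resolving sets small; the extremal example is the even cycle, where every $U(\{x,y\})$ has size $2$ regardless of $D$. This is exactly why the hypothesis excludes even cycles, and why $\res(G)\geq 3$ is imposed, since that discards paths and odd cycles, all of which have resolving number $2$. I therefore expect the heart of the proof to be a case analysis governed by the girth $\g(G)$, separating the thin, cycle-like behaviour from genuinely branching behaviour: when $G$ is neither a path nor a cycle it must contain a vertex of degree at least $3$ on or adjacent to the diametral path, and one must show, pinning down via Lemma~\ref{landmarks} the distances from the off-path neighbours to all the $w_i$, that this branching forces $|U(\{x,y\})|\geq (D+2)/3$ for a suitable pair. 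Once such a pair is found, the identity above yields $\res(G)\geq (D+2)/3+1=(D+5)/3$, that is $D\leq 3\res(G)-5$, as claimed.
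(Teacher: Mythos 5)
Your reduction is correct as far as it goes: the identity $\res(G)=1+\max_{\{x,y\}}|U(\{x,y\})|$ is valid (a $k$-set fails to resolve precisely when it lies inside some $U(\{x,y\})$, and $U$ itself is a non-resolving set of size $\max|U|$), and it correctly translates the claim into finding a pair with $|U(\{x,y\})|\geq (d(G)+2)/3$. But the proof stops exactly where the mathematics begins: you explicitly defer ``the construction of this pair in full generality,'' and the route you sketch for it would fail. A branch vertex on the diametral path need not carry \emph{two} symmetric off-path neighbours --- the typical hard case is a single pendant vertex $w$ attached at one interior path vertex (a broom), where no such symmetric pair exists, and no single pair need have $|U|$ of size comparable to the whole path. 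Your fallback, ``a case analysis governed by the girth,'' is a guess with no content; notably, the paper's proof never uses the girth at all.

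The actual mechanism, which is absent from your proposal, is a pigeonhole over \emph{three} pairs sharing one off-path vertex, not a single symmetric pair. The paper assumes $d(G)>3r-5$ with $r=\res(G)$, takes a shortest path $P$ on $3(r-1)$ vertices, and finds $w\notin P$ adjacent to an interior vertex $u_i$ (if no such $w$ exists, the initial segment $\{u_1,\dots,u_r\}$ is already non-resolving --- a degenerate case your proposal also overlooks, and the place where the exclusion of even cycles is consumed). Since $P$ is a shortest path, each $u_j$ resolves all pairs among $u_{i-1},u_i,u_{i+1}$, while Lemma~\ref{landmarks} forces $d(u_j,w)$ to coincide with $d(u_j,u_{i-1})$, $d(u_j,u_i)$ or $d(u_j,u_{i+1})$; hence $P\subseteq A\cup B\cup C$ where $A,B,C$ are the path vertices not resolving $\{w,u_{i-1}\}$, $\{w,u_i\}$, $\{w,u_{i+1}\}$ respectively. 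Each has size at most $r-1$ (your identity, in the paper's phrasing), and $|P|=3(r-1)$ forces $A,B,C$ to be disjoint of size exactly $r-1$; but $u_i\in A\cap C$, a contradiction. This is where the factor $3$ truly comes from --- three candidate pairs covering the path, one of which must pick up $|U|\geq r$ --- rather than from any periodicity of equidistant vertices along the path. In short: your opening identity is sound and compatible with the paper's counting, but the heart of the argument is missing, and the heuristic you propose in its place does not survive the single-pendant case.
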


\begin{proof}

Let us denote $r=\res (G)$. Suppose  on the contrary that
$d(G)>3\res(G)-5$. Then we can assume that there are two vertices
$u$, $v$ such that $d(u,v)=3r-4=3(r-1)-1$. Consider
 a shortest $u$-$v$ path $P=\{u=u_1, u_2, \ldots, u_{3(r-1)}=v\}$
 and suppose that there is a vertex $w\not\in P$ attached at some
  vertex $u_i$ with $i\neq 1,3(r-1)$ (otherwise it can be easily checked that
  $\{u_1,...,u_{r}\}$ is not a resolving set). Clearly, every vertex
 $u_j\in P$ does not resolve either $\{w,u_{i-1}\}$ or $\{w,u_i\}$ or
 $\{w,u_{i+1}\}$. Indeed, assume $i\leq j$ (analogous for $i> j$).
 By Lemma \ref{landmarks}, $u_j$ does not resolve at least one pair among those formed by the vertices
 $u_{i-1},u_i,u_{i+1},w$. Moreover,
 the pairs $\{u_{i-1},u_i\}$, $\{u_{i-1},u_{i+1}\}$ and $\{u_i,u_{i+1}\}$ are all resolved by $u_j$, since
$P$ is a shortest path. Thus, one pair among
 $\{w,u_{i-1}\}$, $\{w,u_i\}$, $\{w,u_{i+1}\}$ is not resolved by $u_j$.

Consider now the sets $A=\{u_j\in P: d(u_j,w)=d(u_j,u_{i-1})\}$,
$B=\{u_j\in P: d(u_j,w)=d(u_j,u_{i})\}$ and $C=\{u_j\in P:
d(u_j,w)=d(u_j,u_{i+1})\}$. Since these sets are not resolving
sets of $G$, then $|A|,|B|,|C|\leq r-1$. Furthermore, $A\cup B
\cup C=P$ and $|P|=3(r -1)$ and so $|A|=|B|=|C|=r-1$ which
implies that $A$, $B$ and $C$ are pairwise disjoint but $u_i\in
A\cap C$; a contradiction.
\end{proof}

Observe that when $\res(G)\leq 2$ or $G$ is an even cycle,
Proposition~\ref{relationships} does not hold. It suffices to
consider the path $P_2$ (for $\res(G)=1$), an odd cycle of length
at least 5 (for $\res(G)=2$) and an even cycle of length at least
6 (for $\res(G)=3$).

\begin{theorem}\label{resfinite}
For every integer $a\geq 4$, the set of graphs with resolving number $a$ is finite.
\end{theorem}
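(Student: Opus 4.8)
The plan is to bound the order $n$ of any graph $G$ with $\res(G)=a$ in terms of $a$ alone, since a bound on $n$ immediately forces finiteness (there are only finitely many graphs on a bounded number of vertices). The key engine for this is Proposition~\ref{relationships}, which tells us that, provided $a\geq 4$ and $G$ is not an even cycle, the diameter satisfies $d(G)\leq 3a-5$. So the strategy reduces to: \textbf{(i)} use the diameter bound to bound $n$, and \textbf{(ii)} dispose of the even-cycle case separately.

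For step (i), the natural quantity to control is the maximum degree $\Delta(G)$, because a graph with bounded diameter $D$ and bounded maximum degree $\Delta$ has at most $1+\Delta+\Delta(\Delta-1)+\cdots+\Delta(\Delta-1)^{D-1}$ vertices (the standard Moore-type bound obtained by counting vertices reachable within $D$ steps of a fixed vertex). First I would therefore prove that $\Delta(G)$ is bounded in terms of $a$. The idea is that a vertex $u$ of very high degree has many neighbours $u_1,\ldots,u_k$; by Lemma~\ref{landmarks} any other vertex $v$ sits in a set $\mathcal{A}_{ij}=\{v:d(v,u_i)=d(v,u_j)\}$ for some pair $i<j$, so these sets cover $V(G)$. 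Since $\res(G)=a$, no set of more than $a-1$ vertices can fail to be resolving, which constrains how large each $\mathcal{A}_{ij}$ can be; combined with the counting over the $\binom{k}{2}$ pairs this should force $k$ to be bounded by a function of $a$. This mirrors exactly the $\mathcal{A}_{ij}$-covering argument already used in the proof of Lemma~\ref{regularity} (Claim~1) and in the proof of Theorem~\ref{charact}.

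The main obstacle I expect is making the degree bound fully rigorous: the covering argument via $\mathcal{A}_{ij}$ shows each such set is ``small'' only once we know that a non-resolving set has size at most $a-1$, and one must carefully quantify how the $\binom{k}{2}$ sets $\mathcal{A}_{ij}$ (each of bounded size) can cover all $n$ vertices while simultaneously $n$ itself is what we are trying to bound. The clean way around this circularity is to bound $\Delta$ first \emph{independently} of $n$: fix the high-degree vertex $u$, observe that every vertex lies in some $\mathcal{A}_{ij}$, and that a suitable subfamily of these neighbours must itself already fail to resolve some pair once $k\geq a$, giving a contradiction or a direct inequality $\Delta\leq f(a)$. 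Once $\Delta\leq f(a)$ and $d(G)\leq 3a-5$ are both in hand, the Moore-type count yields $n\leq g(a)$.

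For step (ii), the even cycles are handled directly: the even cycle $C_{2m}$ has $\res(C_{2m})=3$ for all $m$, so no even cycle has resolving number $a\geq 4$, and thus the excluded family in Proposition~\ref{relationships} contributes nothing to the set we are counting. Assembling the pieces: for $a\geq 4$, every graph $G$ with $\res(G)=a$ is not an even cycle, hence has $d(G)\leq 3a-5$ and $\Delta(G)\leq f(a)$, hence $n\leq g(a)$; since there are only finitely many graphs on at most $g(a)$ vertices, the set of graphs with resolving number $a$ is finite.
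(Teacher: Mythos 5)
Your proposal is correct in substance but takes a genuinely different route from the paper. The paper's proof is essentially two lines: it cites the known bound $n\leq d(G)^{\dime(G)}+\dime(G)$ from \cite{landmarks}, uses $\dime(G)\leq\res(G)$, and plugs in Proposition~\ref{relationships} to obtain the explicit bound $n\leq(3a-5)^a+a$; even cycles never arise since their resolving number is $3<a$, exactly as you dispose of them in step (ii). You instead re-derive a bound on $n$ from scratch via a maximum-degree bound plus a Moore-type count. Your sketch is completable, and the circularity you worry about dissolves more cleanly than you suggest: fix any vertex $u$ of degree at least $4$ and four of its neighbours $u_1,\ldots,u_4$; by Lemma~\ref{landmarks} and pigeonhole (three possible distance values among four neighbours) every vertex of $G$ lies in one of the six sets $\mathcal{A}_{ij}$, and each $\mathcal{A}_{ij}$ has size at most $a-1$, since any $a$ of its vertices would form an $a$-set failing to resolve $\{u_i,u_j\}$, contradicting $\res(G)=a$. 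Hence $n\leq 6(a-1)$ outright whenever $\Delta(G)\geq 4$ --- no intermediate bound $\Delta\leq f(a)$ is needed, and the Moore count with diameter at most $3a-5$ is only required in the residual case $\Delta(G)\leq 3$. Be aware that your stated claim that some subfamily of neighbours fails to resolve a pair ``once $k\geq a$'' is not justified at that threshold; it becomes true by pigeonholing the neighbours themselves into the six sets once $k>6(a-1)$, but the direct covering bound above is simpler. Comparing the two approaches: the paper's argument is shorter and yields a clean explicit bound by citation, while yours is self-contained modulo Lemma~\ref{landmarks} and avoids the $d^{\dime}$ inequality; note, however, that this inequality comes from the very reference \cite{landmarks} you already invoke, so the paper's shortcut was available to you.
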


\begin{proof}
A graph $G$ of order $n$, diameter $d(G)$ and metric dimension
$\dime (G) $ satisfies the following relation \cite{landmarks}:
$$n\leq d(G)^{\dime (G)}+\dime (G).$$ Since $\dime (G)\leq
\res(G)$ then $$n\leq  d(G)^{\res(G)}+\res(G)$$ and Proposition
\ref{relationships} gives $$n\leq
(3\res(G)-5)^{\res(G)}+\res(G)=(3a-5)^a+a.$$ This upper bound for
$n$ depends only on the value of $a$ and so the result follows.

\end{proof}

\section{Concluding remarks and open questions}

In this paper, we have characterized  the randomly
$k$-dimensional graphs. Our proof is based on combinatorial
arguments which let us avoid the brute force casuistic analysis.
Moreover, we have also proved  in the affirmative a conjecture
posed by Chartrand et al. \cite{upper} claiming that every pair
$a,b$ of integers with $2\leq a \leq b$ is realizable as the
metric dimension and the upper dimension, respectively, of some
connected graph. We have concluded the paper showing that,
surprisingly, no integer $a\geq 4$ is realizable as the resolving
number of an infinite family of graphs.

It would be interesting to study the realization of triples of
integers $a$, $b$, $c$ as the metric dimension, the upper
dimension and the resolving number, respectively, of some
connected graph. Also, the question of bounding the size of the
set of graphs (maybe restricting to specific families) with given
resolving number $a$ remains open.

\end{document}